\theoremstyle{plain}
\newtheorem{theorem}{Theorem}[section]
\newtheorem*{theorem*}{Theorem}
\newtheorem{definition}[theorem]{Definition}
\newtheorem{lemma}[theorem]{Lemma}
\newtheorem{prop}[theorem]{Proposition}
\newtheorem{cor}[theorem]{Corollary}
\newtheorem{rem}[theorem]{Remark}
\newtheorem{ex}[theorem]{Example}
\newtheorem*{mt*}{Main Theorem}
\newenvironment{proofof}[1]{\par
  \pushQED{\qed}%
  \normalfont \topsep6\p@\@plus6\p@\relax
  \trivlist
  \item[\hskip\labelsep
        \bfseries
    Proof of #1\@addpunct{.}]\ignorespaces
}{%
  \popQED\endtrivlist\@endpefalse
}
\newcommand\C{{\mathbb C}}
\newcommand\N{{\mathbb N}}
\newcommand\R{{\mathbb R}}
\newcommand\Z{{\mathbb Z}}
\newcommand\Span{{\hbox{\em Span}}}
\newcommand{\del}{\partial}
\newcommand{\delbar}{\overline{\del}}
\title{On $L_2$-cohomology of almost Hermitian manifolds}
\author{Richard Hind and Adriano Tomassini}
\address{Department of Mathematics \\
University of Notre Dame \\
Notre Dame, IN 46556}
\email{hind.1@nd.edu}
\address{Dipartimento di Scienze Matematiche, Fisiche e Informatiche\\
Plesso Matematico e Informatico,
Universit\`{a} di Parma\\
Parco Area delle Scienze 53/A, 43124 \\
Parma, Italy}
\email{adriano.tomassini@unipr.it}
\keywords{$L_2$-cohomology; almost complex structure; almost K\"ahler structure; symplectic structure; contact structure}
\thanks{The first author is partially supported by Simons Foundation grant \# 317510. \newline The second author is partially supported by the Project PRIN ``Varietà reali e complesse: geometria, topologia e analisi armonica''
and by GNSAGA of INdAM}
\subjclass[2010]{32Q60, 53C15, 58A12}
\begin{document}

\maketitle

\begin{abstract}
Let $(X,J,\omega,g)$ be a complete $n$-dimensional K\"ahler manifold. A Theorem by Gromov \cite{G} states that the if the K\"ahler form is $d$-bounded, then the
space of harmonic $L_2$ forms of degree $k$ is trivial, unless $k=\frac{n}{2}$. Starting with a contact manifold $(M,\alpha)$ we show that the same conclusion
does not hold in the category of almost K\"ahler manifolds. Let $(X,J,g)$ be a complete almost Hermitian manifold of dimension four. We prove that the reduced $L_2$ $2^{nd}$-cohomology group decomposes as direct sum
of the closure of the invariant and anti-invariant $L_2$-cohomology. This generalizes a decomposition theorem by Dr\v{a}ghici, Li and  Zhang \cite{DLZ} for $4$-dimensional
closed almost complex manifolds to the $L_2$-setting.
\end{abstract}

\section{Introduction}
Cohomological properties of closed complex manifolds have been recently studied by many authors, focusing on their relations with other special structures (see e.g. \cite{LZ,DLZ,ATZ} and the references therein). Starting with an almost complex manifold $(X,J)$,  in \cite{DLZ} $J$-anti-invariant and $J$-invariant cohomology groups are defined; in particular, in \cite{DLZ} it is proved that on a closed almost complex $4$-manifold the $2^{nd}$-de Rahm cohomology group decomposes as the direct sum of $J$-invariant and $J$-anti-invariant cohomology subgroups. \newline
The aim of this paper is to study cohomological properties of non compact almost complex manifolds. In this context, $L_2$-cohomology provides a useful tool to study the relationship between such properties and the existence of further structures, e.g., K\"ahler, almost K\"ahler structures.

In \cite{G} Gromov developed $L_2$-Hodge theory for complete Riemannian manifolds, respectively K\"ahler manifolds, proving an $L_2$-Hodge decomposition Theorem
for $L_2$-forms. \newline As a consequence, for a complete and $d$-bounded K\"ahler manifold $X$, denoting by $\mathcal{H}^k_2$, respectively $\mathcal{H}^{p,q}_2$, the space
of $\Delta$-harmonic $L_2$-forms of degree $k$, respectively $\Delta_{\delbar}$-harmonic $L_2$-forms of bi-degree $(p,q)$, he showed that
$\mathcal{H}^k_2\simeq\oplus_{p+q=k}\mathcal{H}^{p,q}_2$; furthermore, denoting by $m=\dim_\C X$, that $\mathcal{H}^k_2=\{0\}$, for all $k\neq m$ and hence
$\mathcal{H}^{p,q}_2=\{0\}$, for all $(p,q)$ such that $p+q\neq m$. A key ingredient in the proof is the Hard Lefschetz Theorem.

In the present paper we show that such a conclusion no longer holds in the category of non compact almost K\"ahler manifolds. 
Indeed, by using methods of contact
geometry, starting with a contact manifold $(M,\alpha)$ having an exact symplectic filling (see Definition \ref{symplectic-filling}), 
we construct a $d$-bounded complete almost
K\"ahler manifold $Y$ satisfying $L_2H^1(Y)\neq\{0\}$. 
The result is applied to compact quotients of the $(2n-1)$-dimensional Heisenberg group.

Next we focus on $L_2$-cohomology of non compact almost Hermitian $4$-dimensional manifolds. We prove that if $(X,J,g)$ is a 
$4$-dimensional complete almost Hermitian manifold, then the reduced $L_2$-cohomology $L_2H^2(X;\R)$ decomposes as the direct sum 
of the closure of $J$-anti-invariant and $J$-invariant cohomology, namely
$L_2H^2(X;\R)=\overline{L_2H^+(X)}\oplus\overline{L_2H^-(X)}$. This can be viewed as a sort of $L_2$-Hodge decomposition theorem 
for complete almost Hermitian manifolds and it generalizes Dr\v{a}ghici,  Li and  Zhang's Theorem \cite{DLZ} for closed $4$-dimensional 
almost complex manifold to the $L_2$-cohomology.

The paper is organized as follows: in Section 2 we recall some generalities regarding $L_2$-cohomology. 
Section 3 is devoted to the proof of non vanishing of the first $L_2$-cohomology group. In Section 4 we prove the 
decomposition Theorem \ref{decomp} and give cohomological obstructions for an almost complex structure to admit a compatible
complete symplectic form.
\newline
Finally, we would
like mention an open question. An almost-complex manifold of dimension at least $6$ 
may have a taming symplectic form but not a compatible symplectic form (see e.g., \cite{MT}).
For closed $4$-dimensional manifolds however, there are no local obstructions and Donaldson in \cite{D} 
raised the following question:

\medskip
\noindent{\bf Donaldson's Question}(\cite{D}){\em  
If $J$ is an almost complex structure on a compact 4-manifold
which is tamed by a symplectic form, is there a symplectic form compatible
with J?}

Moving to the complex case, it is still unknown whether a closed complex manifold $X$ of dimension at least $6$ with a taming symplectic form also has a 
compatible symplectic form, in other words, whether it is K\"{a}hler. Such a question has a positive answer by Li and Zhang for complex surfaces \cite[Theorem 1.2]{LZ}.
Here is an analogue of the question for open manifolds.

\medskip
\noindent{\bf Question.} {\em Let $(X,J)$ be a complex $2n$-dimensional manifold. Suppose 
there exists a $d$-bounded  symplectic form $\omega$ taming $J$ such that $g(\cdot,\cdot)=\frac{1}{2}(\omega(\cdot,J\cdot)-\omega(J\cdot,\cdot))$ 
is complete. Does $(X,J)$ admit a complete $d$-bounded K\"ahler structure whose corresponding metric is uniformly comparable to $g$?}

Our construction in section 3 gives $d$-bounded complete almost complex
manifolds $Y$ which admit a taming symplectic form and corresponding complete metric satisfying $L_2H^1(Y;\R)\neq\{0\}$.
If our construction could be upgraded to give examples of (integrable) complex manifolds with this property then by Gromov's theorem there could not be a compatible K\"{a}hler structure with comparable metric, thus implying a negative answer.

\medskip
\noindent{\sl Acknowledgments.} We thank Weiyi Zhang for useful comments. The second author would like to thank the Math Departments of Stanford and Notre Dame Universities for their warm hospitality.

\section{Preliminaries}\label{preliminaries}
We start by recalling some notions about $L_2$-cohomology. Let $(X,g)$ be a Riemannian manifold and denote by $\Omega^k(X)$ the space of smooth
$k$-forms on $X$. Then $\alpha\in \Omega^k(X)$ is said to be {\em bounded} if the $L_\infty$-norm of $\alpha$
is finite, namely,
$$
\lVert\alpha\lVert_{L_\infty(X)}=\sup_{x\in X}\vert\alpha(x)\vert\, < +\infty
$$
where $\vert\alpha(x)\vert$ denotes the pointwise norm induced by the metric $g$ on the space of forms.
By definition, a (smooth) $k$-form $\alpha$
is said to be {\em d-bounded} if $\alpha=d\beta$, where $\beta$ is a bounded $(k-1)$-form.
Furthermore,
a $k$-form $\alpha$ is said to be
$L_2$, namely $\alpha\in L_2\Omega^k(X)$ if
$$
\lVert\alpha\lVert_{L_2(X)}:=\left(\int_X\vert\alpha(x)\vert^2\, dx\right)^{\frac{1}{2}}<+\infty,
$$
that is the pointwise norm $\lvert \alpha\lvert^2$ is integrable.
Denote by $(L_2A^\bullet(X),d)$ the sub-complex of $(\Omega^\bullet(X),d)$ formed by differential forms $\alpha$ such that both $\alpha$ and
$d\alpha$ are in $L_2$. Then the {\em reduced $L_2$-cohomology group} of {\em degree} $k$ of $X$ is defined as
$$
L_2H^k(X;\R)=L_2A^k(X)\cap\ker d\Big\slash\overline{dL_2(A^{k-1}(X))}.
$$
We recall the following (see \cite[Lemma 1.1.A]{G})
\begin{lemma}\label{stokes}
Let $(X,g)$ be a complete Riemannian manifold of dimension $n$ and let $\alpha\in L_1\Omega^{n-1}(X)$, that is 
$$
\int_X \vert\alpha(x)\vert\, dx <+\infty.
$$ 
Assume that also $d\alpha\in
L_1\Omega^n(X)$. Then
$$
\int_X d\alpha =0.
$$
\end{lemma}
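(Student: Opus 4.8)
The plan is to prove the identity by a cutoff-and-exhaustion argument, the standard device for extending Stokes' theorem to $L_1$-forms on complete (but noncompact) manifolds. First I would fix a base point $p \in X$ and let $r(x) = d(p,x)$ denote the Riemannian distance. By completeness and the Hopf--Rinow theorem, every closed metric ball $\overline{B(p,R)}$ is compact, so a form supported in such a ball is compactly supported and the classical Stokes theorem applies to it. Since $\alpha$ and $d\alpha$ are smooth by hypothesis, no regularity issues arise beyond the construction of the cutoffs.

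Next I would construct, for each $R>0$, a smooth cutoff function $\phi_R \colon X \to [0,1]$ with $\phi_R \equiv 1$ on $B(p,R)$, support contained in $B(p,2R)$, and $|d\phi_R| \le C/R$ for a constant $C$ independent of $R$. Such functions exist precisely because of completeness: one sets $\phi_R = \chi(r/R)$ for a fixed smooth profile $\chi$ equal to $1$ on $[0,1]$ and to $0$ on $[2,\infty)$, using that the distance function is $1$-Lipschitz (hence $|dr|\le 1$ almost everywhere), and then replaces $r$ by a smoothing with the same gradient bound up to a harmless factor.

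The heart of the argument is then a two-term limit. Since $\phi_R\alpha$ has compact support, Stokes gives $\int_X d(\phi_R\alpha)=0$; expanding by the Leibniz rule yields
$$
\int_X \phi_R\, d\alpha = -\int_X d\phi_R \wedge \alpha .
$$
For the left-hand side, $\phi_R \to 1$ pointwise while $|\phi_R\, d\alpha| \le |d\alpha| \in L_1$, so dominated convergence gives $\int_X \phi_R\, d\alpha \to \int_X d\alpha$. For the right-hand side, the gradient bound together with the fact that $d\phi_R$ is supported in the annulus $B(p,2R)\setminus B(p,R)$ gives
$$
\left| \int_X d\phi_R \wedge \alpha \right| \le \frac{C}{R} \int_X |\alpha| \longrightarrow 0
$$
as $R \to \infty$, because $\alpha \in L_1$. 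Combining the two limits forces $\int_X d\alpha = 0$.

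I expect the only genuine obstacle to be the construction of the cutoff functions with the decay $|d\phi_R| \le C/R$, since this is exactly where completeness is essential and where one must be careful about the non-smoothness of the distance function; working with Lipschitz $\phi_R$ directly and invoking Stokes in the distributional sense is an acceptable alternative that avoids the smoothing step. Everything else is routine dominated-convergence bookkeeping.
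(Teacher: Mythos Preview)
Your argument is correct and is the standard cutoff-and-exhaustion proof of this $L_1$-Stokes lemma. Note that the paper does not supply its own proof of this statement: it simply quotes the result as \cite[Lemma 1.1.A]{G}, and Gromov's argument there is essentially the one you outline.
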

Let $\Delta = d\delta +\delta d$ denote the Hodge Laplacian and set
$$\mathcal{H}^k_2=\{\alpha\in L_2\Omega^k(X)\,\,\,\vert\,\,\, \Delta\alpha =0\}$$
namely, $\mathcal{H}^k_2$ is the space of harmonic $L_2$-forms on $(X,g)$ of degree $k$. Then, under 
the assumption that $(X,g)$ is complete,
Gromov proved the following Hodge decomposition for $L_2$-forms (see \cite{G}), namely,
\begin{equation}\label{L2Hodge}
L_2\Omega^k(X)= \mathcal{H}^k_2\oplus\overline{d(L_2\Omega^{k-1}(X))}\oplus\overline{\delta(L_2\Omega^{k+1}(X))},
\end{equation}
where $\overline{d(L_2\Omega^{k-1}(X))}$ means the closure in $L_2\Omega^k(X)$ of
$L_2\Omega^k(X)\cap d(L_2\Omega^{k-1}(X))$ and similarly for
$\overline{\delta(L_{2}(A^{k+1}(X))}$.
Given any $\alpha,\beta\in \Omega^k(X)$, we set
$$
\langle\alpha,\beta\rangle=\int_X g(\alpha,\beta)dx.
$$
We have the following
\begin{lemma}\label{d-closed}
Let $(X,g)$ be a complete Riemannian manifold and let $\alpha\in L_2\Omega^k(X)$. Denote by
$$
\alpha =\alpha_H + \lambda+\mu
$$
the Hodge decomposition of $\alpha$, where $\alpha_H\in\mathcal{H}^k_2$, $\lambda\in\overline{d(L_2\Omega^{k-1}(X))}$,
$\mu\in\overline{\delta(L_2\Omega^{k+1}(X))}$. If $d\alpha=0$, then
$\mu=0$.
\end{lemma}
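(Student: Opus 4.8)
The plan is to show that the $\overline{\delta(L_2\Omega^{k+1}(X))}$-component $\mu$ has vanishing $L_2$-norm, whence $\mu=0$. The starting observation is that the decomposition \eqref{L2Hodge} is $L_2$-orthogonal, its three summands being mutually orthogonal in $L_2\Omega^k(X)$. Since $\alpha_H$ and $\lambda$ are then orthogonal to $\mu$, pairing the decomposition $\alpha=\alpha_H+\lambda+\mu$ with $\mu$ collapses to
$$
\langle\alpha,\mu\rangle=\langle\mu,\mu\rangle=\lVert\mu\rVert_{L_2(X)}^2 .
$$
Thus it suffices to prove that $\langle\alpha,\mu\rangle=0$ whenever $d\alpha=0$.

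Next I would exploit the definition of $\mu$ as an $L_2$-limit. By definition of $\overline{\delta(L_2\Omega^{k+1}(X))}$ there is a sequence $\eta_j\in L_2\Omega^{k+1}(X)$ with $\delta\eta_j\in L_2\Omega^k(X)$ and $\delta\eta_j\to\mu$ in $L_2$. Continuity of the inner product in its second slot gives $\langle\alpha,\mu\rangle=\lim_j\langle\alpha,\delta\eta_j\rangle$, so the whole problem reduces to the single integration-by-parts identity $\langle\alpha,\delta\eta_j\rangle=\langle d\alpha,\eta_j\rangle$, which together with $d\alpha=0$ forces each term to vanish.

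The heart of the argument, and the step where completeness enters, is precisely this global integration by parts, for which I would invoke Lemma \ref{stokes}. Using the pointwise identity $d(\alpha\wedge *\eta_j)=d\alpha\wedge *\eta_j\pm\alpha\wedge d{*}\eta_j$ together with $*d*=\pm\delta$, the difference $\langle\alpha,\delta\eta_j\rangle-\langle d\alpha,\eta_j\rangle$ equals, up to sign, $\int_X d(\alpha\wedge *\eta_j)$. The $(n-1)$-form $\alpha\wedge *\eta_j$ lies in $L_1$ because $\lvert\alpha\wedge *\eta_j\rvert\le\lvert\alpha\rvert\,\lvert\eta_j\rvert$ and both factors are $L_2$, so the product is $L_1$ by Cauchy--Schwarz; likewise $d(\alpha\wedge *\eta_j)$ is dominated by $\lvert d\alpha\rvert\,\lvert\eta_j\rvert+\lvert\alpha\rvert\,\lvert\delta\eta_j\rvert$, again $L_1$ since $d\alpha,\eta_j,\alpha,\delta\eta_j\in L_2$. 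Lemma \ref{stokes}, whose hypothesis is exactly the completeness of $(X,g)$, then yields $\int_X d(\alpha\wedge *\eta_j)=0$, and hence $\langle\alpha,\delta\eta_j\rangle=\langle d\alpha,\eta_j\rangle=0$.

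Combining the three steps gives $\lVert\mu\rVert_{L_2(X)}^2=\langle\alpha,\mu\rangle=\lim_j\langle d\alpha,\eta_j\rangle=0$, so $\mu=0$. I expect the main obstacle to be this global integration by parts for $L_2$ forms on a non-compact manifold: lacking compact support one cannot integrate by parts naively, and it is only the completeness hypothesis, funneled through Lemma \ref{stokes} and the Cauchy--Schwarz integrability estimates, that legitimizes the manipulation. I would also make sure that the orthogonality of \eqref{L2Hodge} is genuinely available (it is part of Gromov's $L_2$-Hodge theorem) and that the sign conventions in $\delta=\pm *d*$ are tracked consistently, though these signs do not affect the final vanishing conclusion.
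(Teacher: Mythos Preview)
Your proof is correct and follows essentially the same route as the paper: approximate $\mu$ by $\delta\eta_j$, use the adjoint relation $\langle\alpha,\delta\eta_j\rangle=\langle d\alpha,\eta_j\rangle=0$, pass to the limit to get $\langle\alpha,\mu\rangle=0$, and conclude via the $L_2$-orthogonality of the decomposition. The only difference is that the paper simply asserts the adjoint identity, whereas you supply the justification through Lemma \ref{stokes} and the Cauchy--Schwarz $L_1$ estimates; this extra care is welcome and changes nothing in the logic.
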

\begin{proof}
Let $\{\delta\mu_j\}_{j\in\N}$ be a sequence of $\delta$-exact forms in $L_2\Omega^k(X)$ such that $\mu_j\in L_2\Omega^{k+1}(X)$ for evey $j\in\N$ and 
$\delta\mu_j\to \mu$ in $L_2$. Then
$$
\langle \alpha,\delta\mu_j\rangle=\langle d\alpha,\mu_j\rangle =0.
$$
Now
$$
\begin{array}{ll}
\vert\langle\alpha,\mu \rangle-\langle \alpha,\delta\mu_j\rangle \vert&=
\vert\int_Xg(\alpha,\mu-\delta\mu_j) dx\vert\leq
\int_X\vert\alpha\vert\,\vert \mu -\delta\mu_j\vert dx\\[10pt]
&\leq \lVert \alpha\lVert_{L_2(X)}\lVert\mu -\delta\mu_j\lVert_{L_2(X)}.
\end{array}
$$
Hence, the numerical sequence $\{\langle\alpha, \delta\mu_j\rangle\}_{j\in\N}$ converges to $\langle \alpha,\mu\rangle$.
and consequenltly, $\langle \alpha,\mu\rangle=0$. Therefore, by the $L_2$-orthogonality of the $L_2$-Hodge decomposition,
it follows that $\mu=0$.
\end{proof}

\section{$L_2$-cohomology and contact structures}
Let now $(X,J)$ be a complex manifold and $g$ be a Hermitian metric. Then according to Gromov \cite[1.2.B]{G}, if $X$ is
a complete $n$-dimensional K\"ahler manifold whose K\"ahler form $\omega$ is $d$-bounded, then $\mathcal{H}^k_2=\{0\}$, unless
$k=\frac{n}{2}$. \newline
In this section we will see that the same conclusion does not hold in
the category of almost K\"ahler manifolds.

To begin, let $M$ be a $(2n-1)$-dimensional compact contact manifold, $n>1$ and denote
by $\alpha$ a contact form. Let $\xi=\ker\alpha$ be the contact distribution and $R$ be the Reeb vector field. \newline
On the product
manifold $X=M \times (3,+\infty)$, with $t$ the coordinate on $(3, \infty)$, let $\rho=\rho(t)$ be a positive smooth function, such that $\rho'>0$
and let $\omega_\rho=d(\rho\alpha)$. Then
$\omega_\rho$ is a symplectic form on $X$.

\begin{definition}\label{symplectic-filling} We say that a contact manifold with contact form $\alpha$ has an {\em exact symplectic filling} if there exists a
compact exact symplectic manifold $(W, \omega = d\lambda)$ with $\partial W = M$ and $\lambda|_M = \alpha$. Furthermore we require the
Liouville field $\zeta$ defined by $\zeta \rfloor \omega = \lambda$ to be outward pointing along $M$.
\end{definition}

We remark that if a particular contact form on $M$ has an exact symplectic filling then so do all other contact forms which generate
the same contact structure, that is, all $\alpha'$ such that $\ker \alpha' = \ker \alpha$.

A version of Darboux' Theorem implies that a tubular neighborhood of $M = \partial W$ in $W$ can be identified symplectically with
$(M \times (-\delta, 0], d(e^t \alpha))$, and we may choose a primitive on $W$ equal to $e^t \alpha$ in this neighborhood.

\begin{prop} Suppose that $(M, \alpha)$ has an exact symplectic filling and $\rho(3)>1$. Then there exists an exact
symplectic manifold $(Y, \omega = d\beta)$
such that the complement of a compact set may be identified with $X = M \times (3,+\infty)$ via a diffeomorphism pulling back $\rho\alpha$ to $\beta$.
\end{prop}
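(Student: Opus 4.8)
The plan is to realize $Y$ as (a truncation of) the symplectic completion of the filling $W$, producing the prescribed cylindrical end not by interpolating primitives but by a change of the radial coordinate. First I would attach the positive symplectization to $W$: by the version of Darboux's theorem recalled just above the statement, a collar of $\partial W = M$ is identified with $(M\times(-\delta,0],\,d(e^s\alpha))$ carrying the primitive $e^s\alpha$, so I can glue $M\times[0,+\infty)$ to $W$ along $M$ and extend the primitive to be $e^s\alpha$ on the entire half-cylinder. The resulting closed $2$-form $d\beta$, with $\beta=\lambda$ on $W$ and $\beta=e^s\alpha$ on the cylinder, is smooth across $M$ by the collar identification and is symplectic everywhere: nondegeneracy on the cylinder is the standard symplectization computation, using that $\alpha\wedge(d\alpha)^{n-1}\neq 0$. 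This already gives an exact symplectic manifold.

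The second step is the reparametrization. Since $\rho$ is positive and strictly increasing, $t\mapsto\log\rho(t)$ is a diffeomorphism from $(3,+\infty)$ onto $(\log\rho(3),\,L)$, where $L=\log\big(\sup_{(3,+\infty)}\rho\big)\in(0,+\infty]$. The map $\psi(x,t)=(x,\log\rho(t))$ then satisfies $\psi^*(e^s\alpha)=e^{\log\rho(t)}\alpha=\rho(t)\alpha$, so it pulls the symplectization primitive back to the desired primitive $\rho\alpha$. The hypothesis $\rho(3)>1$ enters precisely here: it forces $\log\rho(3)>0$, so that the image of $\psi$ lies in the attached positive part $M\times(0,+\infty)$ of the completion, disjoint from $W$ and from its collar.

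Finally I would set $Y = W\cup_M\big(M\times[0,L)\big)$, namely the completion truncated at level $L$ so as to cover also the case of bounded $\rho$, with primitive $\beta$ as above, and take the compact set $K=W\cup\big(M\times[0,\log\rho(3)]\big)$. Then $Y\setminus K=M\times(\log\rho(3),L)$ is exactly the image of $\psi$, and $\psi^{-1}$ is the required diffeomorphism identifying $Y\setminus K$ with $X=M\times(3,+\infty)$ and pulling $\rho\alpha$ back to $\beta$. The one point needing care, and the only genuine obstacle, is smoothness of the primitive at the junction: gluing $X$ directly onto the symplectization at a single level $t=3$ would be merely continuous, since the radial derivatives $e^{s}$ and $\rho'(t)$ disagree there. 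Reparametrizing the whole end rather than cutting and pasting at one level avoids any interpolation and keeps $\beta$ smooth, which is what makes the construction go through.
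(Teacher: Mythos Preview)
Your argument is correct and gives a genuinely different construction from the paper's. The paper keeps the $t$-coordinate and instead \emph{extends} $\rho$ from $(3,+\infty)$ down to $(-\delta,+\infty)$ so that $\rho=e^t$ near $t=0$ and $\rho'>0$ everywhere; then $\beta=\rho(t)\alpha$ on the whole cylinder matches the collar primitive smoothly, $Y=W\cup\big(M\times(-\delta,+\infty)\big)$, and the identification with $X$ is literally the identity on $M\times(3,+\infty)$. The hypothesis $\rho(3)>1$ is what allows such a monotone extension down to $\rho(0)=1$. Your route instead fixes the symplectization primitive $e^s\alpha$ once and for all and absorbs $\rho$ into a radial reparametrization $s=\log\rho(t)$; the hypothesis $\rho(3)>1$ then serves to keep the image inside the attached positive half-cylinder. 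Your version is arguably cleaner conceptually and handles bounded $\rho$ by truncating at $L$, while the paper's version has the mild practical advantage that the diffeomorphism with $X$ is the identity, so the explicit $t$-integrals in the subsequent theorem require no change of variables. Both sidestep the smoothness issue you flag in your last paragraph, just by different means: you reparametrize, the paper interpolates.
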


\begin{proof} We set $Y = W \cup (M \times (-\delta, \infty))$ where we identify $M \times (-\delta, 0]$ with a tubular neighborhood
of $M= \partial W$ as above. Then define $\beta|_W = \lambda$ and $\beta|_{M \times (0,\infty)} = \rho(t)\alpha$ where $\rho$ is extended
to $(-\delta,+\infty)$ such that $\rho=e^t$ for $t$
close to $0$ and $\rho' >0$ for all $t>0$.
\end{proof}

\begin{rem}\label{extension}
We note that if $\rho \alpha$ is bounded (with respect to any metric) on $M \times [3,+\infty) \subset Y$ then it is globally bounded; any compatible almost complex
structure on $M \times [3,+\infty)$ extends to a compatible almost complex structure on $Y$; any exact $1$-form $\gamma$ on $M \times [3,+\infty)$
extends to an exact $1$-form on $Y$,
and if $\gamma$ lies in $L_2$ then so does its extension.
\end{rem}
Given a smooth function $f:(3,+\infty)\to (0,+\infty)$ we will always consider almost complex structures $J$ such that
$$
JR=-\frac{1}{f}\frac{d}{dt}\qquad \hbox{\rm and}\qquad J(\xi) = \xi\,.
$$
Suppose that $\eta$ is a $1$-form on $M$ which on a compact set $K \subset M$ is never $0$ and satisfies $\eta(R)=0$. Also assume that
there exists a trivializing frame $\{V_1,\ldots,V_{2n-2}\}$ on the contact
distribution $\xi |_K$ which is a symplectic basis of $d\alpha\vert_\xi$, namely such that
$$d\alpha\vert_\xi=\sum_{i=1}^{n-1}V^{2i-1}\wedge V^{2i},$$
where $\{V^1,\ldots,V^{2n-2},\alpha\}$ denotes the dual coframe of $\{V_1,\ldots,V_{2n-2},R\}$.
Then $\eta |_K =\sum_{i=1}^{2n-2}a_iV^i$, for suitable smooth
functions $a_1,\ldots, a_{2n-2}$ on $K$. \newline
Define $V=\sum_{i=1}^{2n-2}a_iV_i$.
Given a positive smooth function $\varepsilon=\varepsilon(t)$, on $(3,+\infty)$ we say that an {\em almost complex structure
is adapted to $\eta |_K$} if in addition
$$
J_{\varepsilon,f}(V)=\frac{1}{\varepsilon}\sum_{i=1}^{2n-2}a_{2i-1}V_{2i}-a_{2i}V_{2i-1}.
$$
An almost complex structure adapted to $\eta |_K$ is specified globally on the distribution $\langle R,\frac{d}{dt}\rangle$ and on
$K$ on the larger distribution $\langle R,\frac{d}{dt},V, JV\rangle$. Such an almost complex structure can be extended to an almost
complex structure on (the tangent bundle of) $X$ which
is compatible with $\omega_\rho$ and such that $J_{\varepsilon,f}(\xi)\subset\xi$. \newline
We will denote by $g_{\varepsilon,f,\rho}$ the Riemannian metric
associated with $(\omega_\rho,J_{\varepsilon,f})$, that is $g_{\varepsilon,f,\rho}(\cdot,\cdot)=\omega_\rho (\cdot,J_{\varepsilon,f}\cdot)$.

\begin{theorem}\label{almost-kaehler}
Let $\rho(t)=\log t$, $\varepsilon(t)=\rho(t)t^{1-n}$ and $f(t)=\frac{1}{t\log^2t}$. Then $(X,\omega_\rho,J_{\varepsilon,f})$
is an almost K\"ahler manifold, and if $(M, \alpha)$ has an exact symplectic filling then the structure extends to $Y$.
Further:
\begin{itemize}
\item[i)] $\omega_\rho$ is $d$-bounded.
\item[ii)] $(Y,\omega_\rho,J_{\varepsilon,f},g_{\varepsilon,f,\rho})$ is complete.
\item[iii)] Suppose that $\gamma$ is a $1$-form on $M$ such that there exists a trivializing frame on
the support of $\gamma$. Moreover $\gamma = h \eta$ for $h:M \to \R$ where $\eta(x) \neq 0$ and $\eta(R)(x) =0$
for all $x$ in the support of $\gamma$. Given an almost complex structure adapted to $\eta$ we have that (the pull-back of) $\gamma\in L_2\Omega^1(X)$
and $\gamma\in \overline{dL_2(\mathcal{C}^\infty(X))}$ only if $\gamma=0$.
If $\gamma$ is exact then it extends to a $1$ form on $Y$ and the conclusions hold for the extension.
\end{itemize}

\end{theorem}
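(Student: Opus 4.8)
The plan is to read the metric $g_{\varepsilon,f,\rho}=\omega_\rho(\cdot,J_{\varepsilon,f}\cdot)$ off the coframe dual to $\{V_1,\dots,V_{2n-2},R,\tfrac{d}{dt}\}$ and then verify the four assertions in turn, the real content being iii). \emph{Metric, almost Kähler structure, and i), ii).} Writing $\omega_\rho=\rho'\,dt\wedge\alpha+\rho\,d\alpha$ and using $J_{\varepsilon,f}R=-\tfrac1f\tfrac{d}{dt}$, $J_{\varepsilon,f}\tfrac{d}{dt}=fR$, I would first record the nonzero components $g(\tfrac{d}{dt},\tfrac{d}{dt})=f\rho'$, $g(R,R)=\rho'/f$, and $g(V,V)=\rho|a|^2/\varepsilon$ along the plane $\langle V,J_{\varepsilon,f}V\rangle$ with $V=\sum a_iV_i$, all mixed terms vanishing; hence $g_{\varepsilon,f,\rho}$ is a genuine metric and, $\omega_\rho$ being exact, $(X,\omega_\rho,J_{\varepsilon,f})$ is almost Kähler. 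For the stated $\rho,\varepsilon,f$ this gives $g(\tfrac{d}{dt},\tfrac{d}{dt})=t^{-2}\log^{-2}t$, $g(R,R)=\log^2t$, and dual norms $|\alpha|_g=\log^{-1}t$, $|dt|_g=t\log t$. Assertion i) is then immediate: $|\rho\alpha|_g=\rho\,|\alpha|_g=1$, so $\rho\alpha$ is a bounded primitive of $\omega_\rho$, which by Remark \ref{extension} stays bounded on $Y$. For ii) I would use $s(t)=\log\log t$: since $ds=\frac{dt}{t\log t}$ we get $|ds|_g\equiv1$, and $s$ is proper on $M\times[3,\infty)$ and bounded on the compact $W$; a proper function with bounded gradient forces $(Y,g_{\varepsilon,f,\rho})$ to be complete. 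Compatibility of $J_{\varepsilon,f}$ and the extension across $W$ are supplied by the construction and Remark \ref{extension}.

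\emph{Assertion iii), square-integrability.} Next I would compute $|\eta|_g^2=\varepsilon|a|^2/\rho=|a|^2t^{1-n}$ and, from $\omega_\rho^n/n!$, the volume form $dV_g=\rho'\rho^{n-1}\,dV_M\wedge dt=\frac{(\log t)^{n-1}}{t}\,dV_M\wedge dt$, with $dV_M$ a volume form on the support in $M$. Then $\|\gamma\|_{L_2}^2=\big(\int_M h^2|a|^2\,dV_M\big)\int_3^\infty t^{-n}(\log t)^{n-1}\,dt$, and the $t$-integral converges because $n\ge2$; hence $\gamma\in L_2\Omega^1(X)$. These exponents are exactly what the choices of $\rho,\varepsilon,f$ are engineered to produce.

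\emph{Assertion iii), non-$L_2$-exactness.} The point is that $\gamma$ comes from a closed (even exact) form on $M$ whose natural primitive fails to be $L_2$, so membership in $\overline{dL_2(\mathcal C^\infty(X))}$ must be obstructed. I would use the following pairing principle: if $df_j\to\gamma$ in $L_2$ with $f_j\in\mathcal C^\infty\cap L_2$, then for \emph{any} closed $\Xi\in L_2\Omega^{2n-1}(X)$ one has $df_j\wedge\Xi=d(f_j\Xi)$, and Lemma \ref{stokes} applies (completeness, together with $f_j\Xi\in L_1$ and $df_j\wedge\Xi\in L_1$ by Cauchy--Schwarz), so $\int_X df_j\wedge\Xi=0$; letting $j\to\infty$ gives $\int_X\gamma\wedge\Xi=0$. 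Thus it suffices to produce a \emph{single} closed $\Xi\in L_2$ with $\int_X\gamma\wedge\Xi\neq0$.

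\emph{Construction of $\Xi$, and the main obstacle.} I would obtain $\Xi$ by correcting $*\gamma$ to be closed. A computation yields $*\gamma=\phi(t)\,\sigma\wedge dt$, where $\sigma$ is a $(2n-2)$-form pulled back from $M$ and $\phi(t)$ decays like $t^{-n}$ up to logarithmic factors, whence $d(*\gamma)=\phi(t)\,d_M\sigma\wedge dt$ with $d_M\sigma$ of top degree on $M$. Setting $\psi(t)=\int_t^\infty\phi(\tau)\,d\tau$ (finite since $n\ge2$) and $\Lambda=\psi(t)\,d_M\sigma$, one checks $d\Lambda=d(*\gamma)$, so that $\Xi:=*\gamma-\Lambda$ is closed. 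Because $\psi(t)$ decays like $t^{1-n}$ the correction $\Lambda$ lies in $L_2$, hence $\Xi\in L_2$; and since $\Lambda$ is, at each level, a top-degree form on the $(2n-1)$-manifold $M$ while $\gamma=h\eta$ is a $1$-form pulled back from $M$, the product $\gamma\wedge\Lambda$ vanishes for degree reasons. Therefore $\int_X\gamma\wedge\Xi=\int_X\gamma\wedge*\gamma=\|\gamma\|_{L_2}^2$, which together with the pairing principle forces $\|\gamma\|_{L_2}^2=0$, i.e. $\gamma=0$. When $\gamma$ is exact it extends to $Y$ by Remark \ref{extension}, and the same argument applies after extending $\Xi$ as a closed $L_2$ form across the compact $W$. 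The step I expect to be the main obstacle is producing $\Xi$ that is closed \emph{and} $L_2$ at once: closedness pins down the correction $\Lambda$ via $d(*\gamma)$, and it is precisely the growth rates in $\rho=\log t$, $\varepsilon=\rho\,t^{1-n}$, $f=(t\log^2t)^{-1}$ that keep both $*\gamma$ and $\Lambda$ square-integrable while preserving $\gamma\wedge\Lambda=0$.
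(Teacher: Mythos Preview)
Your treatment of the almost K\"ahler structure, $d$-boundedness, completeness of $Y$, and the $L_2$-membership of $\gamma$ is essentially the paper's, just phrased differently. The divergence is in the non-exactness part of iii), where you use a pairing argument with a closed $L_2$ test form $\Xi$ via Lemma~\ref{stokes}, while the paper tracks $f_j(t)=\int_{M\times\{t\}}\varphi_j^2\,\mathrm{Vol}_M$ directly: Lemma~\ref{dphi} gives a uniform lower bound for $f_j$ on a slab $[3,3+\delta]$, and an elementary differential inequality (using $\gamma(\tfrac{d}{dt})=0$) propagates this bound to all $t$, contradicting $\varphi_j\in L_2$. Note that the paper's argument never invokes completeness of the underlying space.

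There is a genuine gap in your route. You invoke Lemma~\ref{stokes} on $X=M\times(3,\infty)$, but $X$ is \emph{not} complete: by your own computation $|\tfrac{d}{dt}|=\tfrac{1}{t\log t}$, so the end $t\to 3^+$ lies at finite distance $\int_3^4\tfrac{dt}{t\log t}<\infty$. Lemma~\ref{stokes} therefore does not apply on $X$, and the identity $\int_X d(\varphi_j\Xi)=0$ is unjustified; indeed, nothing prevents boundary contributions at $t\to 3^+$ for arbitrary $\varphi_j\in L_2\cap\mathcal C^\infty(X)$. This breaks the pairing principle exactly where you use it, and the theorem does assert the $X$-statement, which is what the paper proves and from which the $Y$-statement follows by restriction. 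Working directly on the complete manifold $Y$ could rescue the idea, but then you must \emph{produce} a closed $L_2$ extension $\tilde\Xi$ on $Y$ with $\int_Y\gamma_Y\wedge\tilde\Xi\neq 0$; your sentence ``extending $\Xi$ as a closed $L_2$ form across the compact $W$'' does not address why the pairing stays nonzero after extension (compactly supported closed modifications of $\tilde\Xi$ leave the pairing unchanged since $\gamma_Y$ is exact, so the value is determined entirely by the tail of $\Xi$ and must be computed, not asserted). A secondary point: the product form $*\gamma=\phi(t)\,\sigma\wedge dt$ with $\sigma$ pulled back from $M$ relies on $\gamma^\sharp$ being a $t$-dependent scalar times a fixed vector field; this does hold here because $\eta$ vanishes on the $d\alpha$-orthogonal complement of $\langle V,V'\rangle$ (one checks $\eta(W)=-h^{-1}\gamma(W)=-d\alpha(V',W)$), but that fact should be stated rather than assumed.
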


\begin{rem}\label{exact-gamma}
Before beginning the proof we give an example of such exact $\gamma$ on contact manifolds. Suppose the contact form has a closed Reeb
orbit and we can choose coordinates $(x_i,y_i,z) \in \R^{2(n-1)} \times \R \slash \Z$ in a tubular neighborhood of the orbit such that
the contact form is given by $\alpha = dz - \sum_{i=1}^{n-1}x_i dy_i$. Hence the Reeb vector field $R = \frac{\partial}{\partial z}$.
Let $\chi: [0,\infty) \to [0,1]$
satisfy $\chi(r)=1$ for $r$ close to $0$ but $\chi(r)=0$ for $r$ away from $0$.
Given $\chi$ we can define $\sigma$ similarly but so
that $\sigma'=-1$ whenever $\chi' \neq 0$. Then $\gamma = d(\chi(\sum_{i=1}^{n-1} x_i^2 + y_i^2))$ is an exact $1$ form of the form
$h \eta$ where $\eta =  d(\sigma(\sum_{i=1}^{n-1}x_i^2 + y_i^2)) \neq 0$ on the support of $\gamma$ and $\eta(R)=0$.
\end{rem}

For the proof of Theorem \ref{almost-kaehler}, we will need the following general
\begin{lemma}\label{dphi}
Let $(Z,g)$ be a Riemannian manifold and let $\gamma\in L_2\Omega^k(Z)$, $\gamma\neq 0$. Let
$\{d\varphi_j\}_{j\in\N}$ be a sequence in $d(L_2\Omega^{k-1}(Z))\cap L_2\Omega^k(X)$ such that
$d\varphi_j\to \gamma$ in $L_2$. Then, for $j>>1$,
$$
\lVert \varphi_j\lVert_{L_2(Z)}\geq C(\gamma)\,,
$$
for a suitable positive constant $C(\gamma)$.
\end{lemma}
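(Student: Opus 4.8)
The plan is to argue by contradiction. If the asserted lower bound fails, then for every positive constant there are arbitrarily large indices $j$ with $\lVert\varphi_j\rVert_{L_2(Z)}$ below it; extracting a subsequence, which I continue to denote $\{\varphi_j\}_{j\in\N}$, I may assume $\lVert\varphi_j\rVert_{L_2(Z)}\to 0$. Passing to a subsequence does not disturb the hypothesis $d\varphi_j\to\gamma$ in $L_2$. Thus it suffices to show that $\varphi_j\to 0$ in $L_2$ together with $d\varphi_j\to\gamma$ in $L_2$ forces $\gamma=0$, contradicting $\gamma\neq 0$.

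The key step is to test $\gamma$ against an arbitrary smooth compactly supported $k$-form $\psi$. Since each $\varphi_j$ is smooth with $\varphi_j,d\varphi_j\in L_2$, and $\psi$ has compact support so that no boundary term appears, Stokes' theorem yields the integration by parts identity
$$
\langle d\varphi_j,\psi\rangle=\langle\varphi_j,\delta\psi\rangle,
$$
where $\delta$ is the formal adjoint of $d$. Letting $j\to\infty$: on the left, $d\varphi_j\to\gamma$ in $L_2$ and $\psi\in L_2$, so by Cauchy--Schwarz $\langle d\varphi_j,\psi\rangle\to\langle\gamma,\psi\rangle$; on the right, $\delta\psi$ is again smooth and compactly supported, hence in $L_2$, while $\varphi_j\to 0$ in $L_2$, so $\langle\varphi_j,\delta\psi\rangle\to 0$. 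Therefore $\langle\gamma,\psi\rangle=0$ for every such $\psi$.

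Finally, smooth compactly supported $k$-forms are dense in $L_2\Omega^k(Z)$, so the orthogonality of $\gamma$ to all of them gives $\langle\gamma,\gamma\rangle=0$, that is $\gamma=0$ in $L_2\Omega^k(Z)$. This contradicts $\gamma\neq 0$ and establishes the existence of a positive constant $C(\gamma)$ as claimed. I do not anticipate a serious obstacle: the only points needing care are the validity of the integration by parts on a possibly incomplete $Z$ --- which is harmless, since the compact support of $\psi$ removes any boundary contribution and reduces matters to ordinary Stokes' theorem on the support --- and the fact that the argument is purely qualitative, yielding the nonvanishing of $\liminf_j\lVert\varphi_j\rVert_{L_2(Z)}$ rather than an explicit value of $C(\gamma)$, in agreement with the statement. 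Note in particular that completeness of $Z$ is nowhere used.
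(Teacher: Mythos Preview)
Your proof is correct and rests on the same key identity as the paper's: for a compactly supported test form $\psi$ one has $\langle d\varphi_j,\psi\rangle=\langle\varphi_j,\delta\psi\rangle$. The paper applies this directly with the single choice $\psi=a\gamma$ for a bump function $a$ satisfying $\langle\gamma,a\gamma\rangle>0$, obtaining the explicit constant $C(\gamma)=\langle\gamma,a\gamma\rangle/\bigl(2\lVert\delta(a\gamma)\rVert_{L_2}\bigr)$; you instead argue by contradiction, pass to a subsequence with $\lVert\varphi_j\rVert_{L_2}\to 0$, and test against all compactly supported $\psi$ to force $\gamma=0$. The paper's route is slightly more economical and yields an explicit bound, while yours is a touch more elementary in that it avoids singling out a specific test form---but the underlying mechanism is identical.
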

\begin{proof} Since $\gamma\neq 0$ there exists a bump function $a$ such that $\langle \gamma,a\gamma\rangle>0$. We have:
$$
\begin{array}{lll}
\langle\gamma,a\gamma\rangle-\langle d\varphi_j, a\gamma\rangle&=&\langle\gamma-d\varphi_j,
a\gamma\rangle=
\int_Xg(\gamma-d\varphi_j,a\gamma)dx\\[10pt]
&\leq&\int_X\vert\gamma-d\varphi_j\vert \vert a\gamma\vert dx
\leq \lVert\gamma-d\varphi_j\lVert_{L_2(X)}\, \lVert a\gamma\lVert_{L_2(X)}.
\end{array}
$$
Set
$$
C_j=\lVert\gamma-d\varphi_j\lVert_{L_2(X)}\, \lVert a\gamma\lVert_{L_2(X)}.
$$
Note that $C_j\to 0$ for $j\to +\infty$.
We obtain
$$
\langle\gamma,a\gamma\rangle -C_j\leq\langle d\varphi_j, a\gamma\rangle=
\langle\varphi_j,\delta(a\gamma)\rangle\leq
\lVert\varphi_j\lVert_{L_2(X)}\, \lVert \delta(a\gamma)\lVert_{L_2(X)}\,.
$$
For $j$ large the left hand side is positive, hence $\lVert\delta(a\gamma)\lVert_{L_2(X)} >0$ and therefore setting
$$
C(\gamma)=\frac{\langle \gamma,a\gamma\rangle}{2\lVert\delta(a\gamma)\lVert_{L_2(X)}}
$$
we get
$$
\lVert\varphi_j\lVert_{L_2(X)}\geq C(\gamma)>0.
$$
\end{proof}
We give now the proof of Theorem \ref{almost-kaehler}
\begin{proofof}{Theorem \ref{almost-kaehler}}
By Remark \ref{extension} it suffices to work on $X$.
By construction, $J_{\varepsilon,f}$ is an almost complex structure on $X$ which is compatible with $\omega_\rho$. Therefore,
$g_{\rho,\varepsilon,f}(\cdot,\cdot)=\omega_\rho(\cdot,J_{\varepsilon,f}\cdot)$ is a Riemannian metric on $X$ and
$(X,\omega_\rho,J_{\varepsilon,f}, g_{\varepsilon,f,\rho})$ is an almost K\"ahler manifold. Then
$$
\omega_{\rho}^n=2\rho'\rho^{n-1}dt\wedge\alpha\wedge(d\alpha)^{n-1}
$$
is a volume form on $X$ and $\hbox{\rm Vol}_M=2\alpha\wedge(d\alpha)^{n-1}$ is a volume form on the compact
contact manifold $M$, so that
$$
\omega_{\rho}^n=\rho'\rho^{n-1}dt\wedge\hbox{\rm Vol}_M\,.
$$ \vskip.1truecm\noindent
i)
By assumption, $\omega=d(\rho\alpha)=d\lambda$, where $\lambda = \rho\alpha$; by definition $\omega_\rho$ is $d$-bounded if $\lambda\in L_\infty(X)$. 
Recalling that $J_{\varepsilon,f}R=-\frac{1}{f}\frac{d}{dt}$, we have,
$$
\vert R\vert^2=\omega_\rho(R,J_{\varepsilon,f}R)=-\omega_\rho(R,\frac{1}{f}\frac{d}{dt})=\frac{\rho'}{f}.
$$
Since $J_{\varepsilon,f}$ preserves the contact distribution $\xi$, we see that $\alpha$ is dual to $\frac{f}{\rho'}R$ with respect to $g_{\varepsilon,f,\rho}$.
Therefore $\vert\lambda\vert^2=\rho^2\frac{f}{\rho'}$. Hence $\lambda\in L_\infty(X)$ if and only if
\begin{equation}\label{d-bounded}
f\leq C\frac{\rho'}{\rho^2},
\end{equation}
where $C$ is a positive constant. By our assumptions,
$$\rho=\log t, \quad f=\frac{1}{t\log^2t},
$$
so that \eqref{d-bounded} is satisfied.\vskip.1truecm\noindent
ii) In order to check completeness of $(Y,\omega_\rho,J_{\varepsilon,f})$ it is enough to estimate
$\int_3^{+\infty}\vert \frac{d}{dt}\vert dt$.
We obtain:
$$
\vert \frac{d}{dt}\vert^2 =\omega_\rho(\frac{d}{dt},J_{\varepsilon,f}\frac{d}{dt})=
\omega_\rho(\frac{d}{dt},fR)=f\rho'.
$$
Therefore,
$$
\int_3^{+\infty}\vert \frac{d}{dt}\vert=\int_3^{+\infty}\sqrt{\rho'f}dt=+\infty\,,
$$
that is $(Y,\omega_\rho,J_{\varepsilon,f},g_{\varepsilon,f,\rho})$ is complete.\vskip.1truecm\noindent
iii) First of all we check that $\gamma\in L_2\Omega^1(X)$. We have the pointwise estimate valid on the support of $\gamma$:
$$
\vert \gamma\vert^2 = h^2 \vert \eta\vert^2 =h^2 \frac{(\eta(V))^2}{\vert V\vert^2}\leq \lVert h \lVert^2 _{L_\infty} \frac{\varepsilon}{\rho}\sum_{i=1}^{2n-2}a_i^2\,.
$$

Therefore, since $\varepsilon=\rho t^{1-n}$, for suitable constants $c_1$, $c_2$ we get:
$$
\lVert \gamma\lVert_{L_2(X)}^2\leq c_1 \int_X\frac{\varepsilon}{\rho} \omega_\rho^n=
c_1 \int_Xt^{1-n}\rho^{n-1}\rho'dt\wedge\hbox{\rm Vol}_M=
c_2 \int_3^{+\infty} \frac{\log^{n-1}t}{t^n}dt  <+\infty.
$$
Let $\gamma\neq 0$. We show that $\gamma\notin \overline{dL_2(\mathcal{C}^\infty(X))}$.\newline
By contradicton: assume that
there exists a sequence $\{\varphi_j\}_{j\in\N}$ in $L_2(\mathcal{C}^\infty(X))$ such that $d\varphi_j\in L^2\Omega^1(X)$ for every $j\in\N$ and
$d\varphi_j\to \gamma$ in
$L_2\Omega^1(X)$.

Set $\gamma_j=d\varphi_j$. 
We also write 
$$
f_j(t):=\int_{M\times\{t\}}(\varphi_j)^2 \,\hbox{\rm Vol}_M\,.
$$
Then
$$
\lVert\varphi_j\lVert^2_{L_2(M\times [a,b])}=\int_a^b\frac{(\log t)^{n-1}}{t}dt\int_{M\times\{t\}}(\varphi_j)^2 \,\hbox{\rm Vol}_M\, = \int_a^b\frac{(\log t)^{n-1}}{t}f_j(t)dt.
$$
We will show that $f_j(t)$ is bounded away from $0$ for large $j$, contradicting the assumption that $\varphi_j \in L_2(\mathcal{C}^\infty(X))$.

First, for the pointwise norm, since $\gamma(\frac{d}{dt})=0$, we have the estimate:
\begin{equation}\label{gamma}
\vert \gamma_j(\frac{d}{dt})\vert=\vert (\gamma_j-\gamma)(\frac{d}{dt})\vert
\leq\vert \gamma-\gamma_j\vert\,\vert\frac{d}{dt}\vert=\vert \gamma-\gamma_j\vert\sqrt{f\rho'}
=\frac{1}{t\log t}\vert \gamma-\gamma_j\vert.
\end{equation}

Now
$$
f_j'(t)=\int_{M\times\{t\}}2\varphi_jd\varphi_j(\frac{d}{dt}) \,\hbox{\rm Vol}_M=
\int_{M\times\{t\}}2\varphi_j\gamma_j(\frac{d}{dt}) \,\hbox{\rm Vol}_M\,.
$$
Therefore, by \eqref{gamma}, setting
$$
\psi_j(t):=2\lVert \gamma - \gamma_j\lVert_{L_2(M\times\{t\})},
$$
we obtain:
$$
\begin{array}{ll}
\vert f_j'(t)\vert&\leq \int_{M\times\{t\}}2\vert\varphi_j\vert\, \vert \gamma_j(\frac{d}{dt})\vert \,\hbox{\rm Vol}_M
\leq \frac{2}{t\log t}\lVert \varphi_j\lVert_{L_2(M\times\{t\})} \lVert \gamma - \gamma_j\lVert_{L_2(M\times\{t\})}\\[10pt]
&=\frac{1}{t\log t}\sqrt{f_j(t)}\psi_j(t).
\end{array}
$$
From the last expression,
\begin{equation}\label{psi}
\begin{array}{ll}
\Big[\sqrt{f_j(t)}\,\Big]^b_a&\leq\int_a^b\frac{1}{t\log t}\psi_j(t)dt
\leq\Big(\int_a^b\frac{1}{t (\log t)^{n+1}}dt\Big)^{\frac{1}{2}} \Big(\int_a^b \frac{(\log t)^{n-1}}{t} \psi^2_j(t)dt\Big)^{\frac{1}{2}}\\[10pt]
{}&=\sqrt{\Big[-\frac{1}{n(\log t)^{n}}\Big]^b_a}\Big(\int_a^b \frac{(\log t)^{n-1}}{t} \psi^2_j(t)dt\Big)^{\frac{1}{2}}.
\end{array}
\end{equation}
Now, by definition,
\begin{equation}\label{psigamma}
\int_a^b \frac{(\log t)^{n-1}}{t} \psi^2_j(t)dt = 
4\int_a^b\Big(\int_{M\times\{t\}}\vert\gamma-\gamma_j\vert^2\hbox{\rm Vol}_M\Big)\frac{(\log t)^{n-1}}{t}dt\leq
4\lVert\gamma-\gamma_j\lVert^2_{L_2(X)}.
\end{equation}
In view of \eqref{psi} and \eqref{psigamma}, we obtain
\begin{equation}\label{psigamma2}
\sqrt{f_j(b)}-\sqrt{f_j(a)}\leq 4\sqrt{\frac{1}{n(\log a)^{n}}-\frac{1}{n(\log b)^{n}}}\lVert \gamma-\gamma_j\lVert_{L_2(X)}.
\end{equation}
By assumption, $d\varphi_j\to\gamma$ in $L_2(X)$, and consequently $d\varphi_j\to\gamma$ also in $L_2(M\times[3,3+\delta])$.
By Lemma \ref{dphi}, it follows that there exists a constant $C(\gamma,\delta)$ such that
that
$$
\lVert\varphi_j\lVert^2_{L_2(M\times[3,3+\delta])}\geq C(\gamma,\delta)\,,
$$
for $j>>1$, where $C(\gamma,\delta) >0$ is independent of $j$, that is
$$
C(\gamma,\delta)\leq \int_3^{3+\delta}\frac{(\log t)^{n-1}}{t}f_j(t)dt.
$$
But
$$
\int_3^{3+\delta}\frac{(\log t)^{n-1}}{t}f_j(t)dt\leq \sup_{3\leq t\leq 3+\delta}\vert f_j(t)\vert \int_3^{3+\delta}\frac{(\log t)^{n-1}}{t}dt,
$$
which implies
$$
\sup_{3\leq t\leq 3+\delta}\vert f_j(t)\vert\geq \frac{n C(\gamma,\delta)}{(\log(3+\delta))^n-(\log 3)^n}.
$$
Therefore, for some $t\in[3,3+\epsilon]$, we have
$$
f_j(t)\geq  \frac{n C(\gamma,\delta)}{(\log(3+\delta))^n-(\log 3)^n}
$$
for large $j$, and we note that the lower bound is independent of $j$. By \eqref{psigamma2}, this implies that $f_j(t)$ is bounded below for large $j$ and all $t$, since $\gamma_j\to\gamma$ in $L_2(X)$. This gives our
contradiction as required. 
\vskip.1truecm\noindent
\end{proofof}
\begin{cor}\label{non-zero-cohomology}
Let $(Y,\omega_\rho,J_{\varepsilon,f}, g_{\varepsilon,f,\rho})$ be an almost K\"ahler structure adapted to $\eta\vert_{\hbox{supp}\,\gamma}$. For
$\eta$, $\gamma$, $\varepsilon$, $f$ as in iii) of Theorem \ref{almost-kaehler}, then
$$
L_2H^1(Y;\R)\neq\{0\}.
$$
\end{cor}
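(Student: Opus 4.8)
The plan is to produce a single nonzero class in the reduced group $L_2H^1(Y;\R)$ directly from the form $\gamma$ supplied by part iii) of Theorem \ref{almost-kaehler}. Recalling from Section \ref{preliminaries} that
$$
L_2H^1(Y;\R)=\bigl(L_2A^1(Y)\cap\ker d\bigr)\Big\slash\overline{dL_2(\mathcal{C}^\infty(Y))},
$$
it suffices to exhibit one closed form in $L_2A^1(Y)$ that does not lie in $\overline{dL_2(\mathcal{C}^\infty(Y))}$.

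First I would fix a nonzero \emph{exact} $1$-form $\gamma=h\eta$ on $M$ of the type described in iii); the existence of such a $\gamma$ is guaranteed by the explicit construction of Remark \ref{exact-gamma} on any contact manifold carrying a closed Reeb orbit. Being exact, $\gamma$ is in particular closed, and the final sentence of iii) ensures that its extension to $Y$ is again exact; moreover part iii) gives that this extension lies in $L_2\Omega^1(Y)$. Since $d\gamma=0$, we conclude $\gamma\in L_2A^1(Y)\cap\ker d$, so $\gamma$ determines a class $[\gamma]\in L_2H^1(Y;\R)$.

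The one remaining point is to verify that $[\gamma]$ is nontrivial, i.e.\ that $\gamma\notin\overline{dL_2(\mathcal{C}^\infty(Y))}$. This, however, is exactly the content of part iii): for $\gamma\neq 0$ the form cannot belong to $\overline{dL_2(\mathcal{C}^\infty(Y))}$. Hence $[\gamma]\neq 0$ and therefore $L_2H^1(Y;\R)\neq\{0\}$.

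I expect no genuine obstacle at this stage, since the entire analytic difficulty has already been absorbed into the proof of Theorem \ref{almost-kaehler}. The subtle phenomenon is that $\gamma$ is globally \emph{exact} as a smooth form---its de Rham class vanishes---yet its natural primitive is not $L_2$, and, more delicately, $\gamma$ admits no $L_2$-approximation by differentials $d\varphi_j$ of $L_2$ functions. The corollary is then simply the observation that such an exact-but-not-reduced-$L_2$-exact closed form represents a nonzero reduced $L_2$ cohomology class, in sharp contrast with Gromov's vanishing theorem in the complete K\"ahler case.
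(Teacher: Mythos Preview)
Your proposal is correct and follows essentially the same approach as the paper: choose a nonzero exact $\gamma$ as in iii), extend it to $Y$, and invoke iii) to conclude that the resulting closed $L_2$ form is not in $\overline{dL_2(\mathcal{C}^\infty(Y))}$, hence $[\gamma]\neq 0$. The paper's proof is a one-sentence version of exactly this argument; your additional remarks on $L_2A^1$ membership and the contrast with the de Rham class are correct elaborations but not logically required.
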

\begin{proof}
Let $\gamma$ be a non-zero exact $1$-form on $M$ satisfyng the hypothesis iii) of Theorem \ref{almost-kaehler}. Then the pull-back of $\gamma$ to
$X$ extends to a $1$-form on $Y$, still denoted by $\gamma$, such that $[\gamma]\neq 0$.
\end{proof}
\begin{cor}
Suppose $h_1,h_2,\ldots$ is an infinite family of linearly indipendent real valued smooth functions on $M$ such that
$\hbox{\rm supp}\,h_j\subset K$ and $\gamma_j=h_j\eta$ is exact for all $j$. Let $\gamma_j$ be an extension to
$Y$ of the pull-back of $\gamma_j$ to $X$. Then $(Y,\omega_\rho,J_{\varepsilon,f}, g_{\varepsilon,f,\rho})$ is an almost K\"ahler structure adapted
to $\eta\vert_K$ for $\eta$, $\gamma$, $f$ as in iii) of Theorem \ref{almost-kaehler}, and $L_2H^1(Y)$ is infinite dimensional.
\end{cor}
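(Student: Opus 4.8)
The plan is to realize the classes $[\gamma_j]\in L_2H^1(Y;\R)$ as an infinite linearly independent set; since the reduced cohomology is a vector space, exhibiting infinitely many independent elements forces it to be infinite dimensional. First I would record the basic well-definedness: each $\gamma_j=h_j\eta$ is exact on $M$, hence closed, and by part iii) of Theorem \ref{almost-kaehler} its pull-back lies in $L_2\Omega^1(X)$ and, being exact, extends to an $L_2$ exact $1$-form on $Y$. Thus every $\gamma_j$ determines a genuine class in $L_2A^1(Y)\cap\ker d\big/\overline{dL_2(\mathcal{C}^\infty(Y))}=L_2H^1(Y;\R)$. That $(Y,\omega_\rho,J_{\varepsilon,f},g_{\varepsilon,f,\rho})$ is almost K\"ahler and adapted to $\eta|_K$ is immediate from the construction, all $\gamma_j$ being built from the single fixed $\eta$ on the single fixed compact set $K$.

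The heart of the argument is linear independence of $\{[\gamma_j]\}_j$, which I would prove by contradiction. Suppose a nontrivial finite relation $\sum_{j=1}^N c_j[\gamma_j]=0$ holds in $L_2H^1(Y;\R)$. Setting $h=\sum_{j=1}^N c_jh_j$, the combination $\gamma:=\sum_{j=1}^N c_j\gamma_j$ equals $h\eta$, and $\gamma$ is again exact (a finite sum of exact forms) with $\operatorname{supp}h\subset K$. The key observation is that this $\gamma$ satisfies \emph{exactly} the hypotheses of part iii) of Theorem \ref{almost-kaehler}, with the very same $1$-form $\eta$ to which the almost complex structure is adapted; no new construction is needed, only that the whole family shares one $\eta$ on one $K$.

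Vanishing of the class means $\gamma\in\overline{dL_2(\mathcal{C}^\infty(Y))}$, so Theorem \ref{almost-kaehler} iii), applied on $Y$ as permitted since $\gamma$ is exact, forces $\gamma=h\eta=0$. Because $\eta$ is nowhere zero on the compact set $K$ which contains $\operatorname{supp}h$, this yields $h\equiv 0$ on $K$, and hence $h\equiv 0$ globally since $h$ is supported in $K$; that is, $\sum_{j=1}^N c_jh_j=0$. The assumed linear independence of the $h_j$ then gives $c_1=\cdots=c_N=0$, contradicting the nontriviality of the relation. Therefore $\{[\gamma_j]\}_j$ is independent and $L_2H^1(Y;\R)$ is infinite dimensional.

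The only points requiring genuine care — and what I regard as the main obstacle — are the bookkeeping of the extensions to $Y$ and the verification that a finite linear combination of the $\gamma_j$ still falls within the single hypothesis class of Theorem \ref{almost-kaehler} iii). Both are handled by two remarks: extension of exact forms is linear, so one may extend a common primitive $\sum_{j}c_j\varphi_j$ of $\gamma$ (where $\gamma_j=d\varphi_j$ on $M$) and differentiate, giving the extension of $\gamma$ as $d$ of that primitive; and, as emphasized above, every $\gamma_j$ is of the form $h_j\eta$ with the \emph{same} $\eta$ and support in the \emph{same} $K$, so any combination is still of this form. Once these are observed, Theorem \ref{almost-kaehler} iii) applies verbatim to the combination $\gamma$ and the argument closes.
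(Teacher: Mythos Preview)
Your proof is correct and follows essentially the same route as the paper: one forms an arbitrary finite combination $\gamma=\sum_j c_j\gamma_j=(\sum_j c_j h_j)\eta$, observes that it again satisfies the hypotheses of Theorem~\ref{almost-kaehler}\,(iii) (equivalently, of Corollary~\ref{non-zero-cohomology}), and concludes that $[\gamma]\neq 0$ unless all $c_j$ vanish. The paper simply cites Corollary~\ref{non-zero-cohomology} at this step, whereas you unwind it back to Theorem~\ref{almost-kaehler}\,(iii) and make explicit the passage from $h\eta=0$ to $h=0$ via the nonvanishing of $\eta$ on $K$; this is added detail rather than a different argument.
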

\begin{proof}
Any finite linear combination $\sum_{j=1}^r c_j\gamma_j$, for $c_i\in\R, j=1,\dots, r$ satisfies the assumptions of Corollary \ref{non-zero-cohomology}.
Hence
$$
\sum_{j=1}^r c_j[\gamma_j]\neq 0.
$$
\end{proof}
\begin{rem}
Such a family $\{\gamma_j\}$ can be constructed exactly as in Remark \ref{exact-gamma} by choosing the functions $\chi_j$ linearly independent.
\end{rem}
\begin{cor}
The almost complex structure $J_{\varepsilon,f}$ is not integrable.
\end{cor}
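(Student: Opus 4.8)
The plan is to argue by contradiction, pitting the nonvanishing of Corollary \ref{non-zero-cohomology} against Gromov's vanishing theorem for complete $d$-bounded K\"ahler manifolds recalled at the beginning of this section. Suppose $J_{\varepsilon,f}$ were integrable. Since $(Y,\omega_\rho,J_{\varepsilon,f},g_{\varepsilon,f,\rho})$ is almost K\"ahler, meaning that $\omega_\rho$ is closed and $g_{\varepsilon,f,\rho}(\cdot,\cdot)=\omega_\rho(\cdot,J_{\varepsilon,f}\cdot)$ is a compatible metric, integrability would upgrade it to a genuine K\"ahler structure on the complex manifold $(Y,J_{\varepsilon,f})$, which is of real dimension $2n$, hence of complex dimension $\dim_\C Y=n$. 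The first thing I would record is that this K\"ahler structure satisfies the hypotheses of Gromov's theorem: by Theorem \ref{almost-kaehler}(i) the K\"ahler form $\omega_\rho$ is $d$-bounded, and by Theorem \ref{almost-kaehler}(ii) the metric $g_{\varepsilon,f,\rho}$ is complete. Thus Gromov's conclusion applies and yields $\mathcal{H}^k_2=\{0\}$ for every $k$ away from the middle degree $\dim_\C Y=n$; since $n>1$ by standing assumption, degree $1$ is not the middle degree, so in particular $\mathcal{H}^1_2=\{0\}$.

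On the other hand, I would identify the reduced first $L_2$-cohomology with the space of harmonic $L_2$ one-forms. Given a closed $\alpha\in L_2\Omega^1(Y)$, its $L_2$-Hodge decomposition \eqref{L2Hodge} reads $\alpha=\alpha_H+\lambda+\mu$ with $\alpha_H\in\mathcal{H}^1_2$, $\lambda\in\overline{d(L_2\Omega^0(Y))}$ and $\mu\in\overline{\delta(L_2\Omega^2(Y))}$, and Lemma \ref{d-closed} forces $\mu=0$ because $d\alpha=0$. Hence $\alpha-\alpha_H=\lambda\in\overline{d(L_2\Omega^0(Y))}$, and by the $L_2$-orthogonality of \eqref{L2Hodge} the assignment $\alpha\mapsto\alpha_H$ descends to an isomorphism $L_2H^1(Y;\R)\cong\mathcal{H}^1_2$. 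But Corollary \ref{non-zero-cohomology} gives $L_2H^1(Y;\R)\neq\{0\}$, so $\mathcal{H}^1_2\neq\{0\}$, contradicting the vanishing obtained above. Therefore $J_{\varepsilon,f}$ cannot be integrable.

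The substantive work has already been carried out in Theorem \ref{almost-kaehler} and Corollary \ref{non-zero-cohomology}; what remains here is essentially bookkeeping, so I do not expect a genuine obstacle. The one point deserving a little care is the identification $L_2H^1(Y;\R)\cong\mathcal{H}^1_2$ in the \emph{reduced} setting, which is precisely what Lemma \ref{d-closed} together with the orthogonality in \eqref{L2Hodge} supply, and the numerical check that degree $1$ is not the middle degree, guaranteed by $n>1$. I would flag only the implicit use of the standard fact that an integrable compatible almost complex structure on a symplectic manifold yields a K\"ahler structure, which is what licenses the appeal to Gromov's theorem.
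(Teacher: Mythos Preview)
Your argument is correct and is exactly the paper's approach: assume integrability, invoke Gromov's vanishing \cite[1.2.B]{G} for complete $d$-bounded K\"ahler manifolds to get $L_2H^1(Y;\R)=\{0\}$, and contradict Corollary \ref{non-zero-cohomology}. The paper compresses this into a single line, while you have helpfully unpacked the identification $L_2H^1(Y;\R)\cong\mathcal{H}^1_2$ and the check that $1\neq n$.
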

\begin{proof}
By \cite[1.2.B]{G}, complete $d$-bounded K\"ahler manifolds have $L_2H^1=\{0\}$.

\end{proof}
\begin{ex}
Let $\mathfrak{g}$ be the $(2n-1)$-dimensional Lie algebra whose dual basis $\mathfrak{g}^*$ has a basis
$\{V^1,\ldots,V^{2n-1}\}$ satisfying the following Maurer-Cartan equation:
$$
dV^{i}=0,\quad\hbox{\em for}\,\, i=1,\ldots ,2n-2,\qquad dV^{2n-1}=\sum_{i=1}^{n-1}V^{2i-1}\wedge V^{2i}.
$$
Accordingly, the dual basis $\{V_1,\ldots, V_{2n-1}\}$ satisfies the following commutation rules:
$$
[V_{2i-1},V_{2i}]=-V_{2n-1}
$$
and the other brackets vanish. Then the connected and simply-connected Lie group $G$ whose Lie algebra is $\mathfrak{g}$ has a lattice $\Gamma$ so that
$M=\Gamma\backslash G$ is a compact $(2n-1)$-dimensional manifold. The $1$-form $\alpha=V^{2n-1}$ gives rise to a contact structure on $M$.
Then $\gamma:=V^1$, $R=V_{2n-1}$ and the global frame $\{V_1,\ldots,V_{2n-1}\}$ satisfy the assumptions. Therefore, Theorem \ref{almost-kaehler}
applies to the manifold
$M\times (3,+\infty)$ with this $\gamma =\eta$.
\end{ex}
\section{$L_2$-Decomposition for almost complex $4$-manifolds}
Let $(X,J,g)$ be a $4$-dimensional almost Hermitian manifold. Then $J$ acts as an involution on the space of smooth $2$-forms
$\Omega^2(X)$: given $\alpha\in \Omega^2(X)$, for every pair of vector fields $u$, $v$ on $X$
$$
J\alpha (u,v)=\alpha(Ju,Jv)\,.
$$
Therefore the bundle $\Lambda^2X$ splits as the direct sum of $\pm 1$-eigenspaces $\Lambda_\pm$, i.e.,
$\Lambda^2X=\Lambda_+\oplus \Lambda_-$. We will refer to the sections of $\Lambda^+_J$, respectively $\Lambda^-_J$
as to the {\em invariant} respectively
{\em anti-invariant} forms, denoted by $\Omega^+(X)$, respectively $\Omega^-(X)$. Let  us denote by
$L_2\mathcal{Z}(X)$ the space of closed $2$-forms which are
in $L_2$ and set
$$
L_2\mathcal{Z}^\pm=L_2\mathcal{Z}(X)\cap \Omega^\pm(X).
$$
Define
$$
L_2H^\pm(X)=\{\mathfrak{a}\in L_2H^2(X;\R)\,\,\,\vert\,\,\, \exists\alpha\in L_2\mathcal{Z}^\pm\,\hbox{\rm such that}\,
\mathfrak{a}=[\alpha]\}.
$$
We will assume that $g$ is a complete $J$-Hermitian metric on $X$ and we will denote by $\omega$ the corresponding fundamental form. Let
$\Lambda^\pm_g$ be the $\pm 1$-eigenbundle of the $*$ Hodge operator associated with $g$ and $\omega$. Then,
we have the following relations
\begin{equation}\label{usefuls}
\Lambda^+_J=\Span_\R\langle\omega\rangle\oplus\Lambda^-_g, \qquad
\Lambda^+_g=\Span_\R\langle\omega\rangle\oplus\Lambda^-_J.
\end{equation}

In general if $\alpha^-$ is anti-invariant then 
\begin{equation}\label{antiinv}
*\alpha^-=\frac{1}{(n-2)!}\alpha^- \wedge \omega^{n-2}.
\end{equation}

We list some immediate consequences of these formulas on $4$-manifolds as a corollary.

\begin{cor}\label{useful} Closed anti-invariant forms are harmonic, that is, we have an inclusion $L_2\mathcal{Z}^-\hookrightarrow \mathcal{H}^2_2$.
All anti-invariant forms are self-dual, while anti self-dual forms are invariant.
\end{cor}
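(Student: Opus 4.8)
The plan is to derive everything from the two structural formulas \eqref{usefuls} and \eqref{antiinv}, specialized to $n=2$. First I would establish that closed anti-invariant forms are harmonic. Take $\alpha^-\in L_2\mathcal{Z}^-$, so $d\alpha^-=0$ by assumption. The key observation is that on a $4$-manifold formula \eqref{antiinv} reads $*\alpha^-=\alpha^-\wedge\omega^{0}=\alpha^-$, since $(n-2)!=0!=1$ and $\omega^{n-2}=\omega^0=1$. Thus every anti-invariant $2$-form is self-dual. From $*\alpha^-=\alpha^-$ we immediately get $\delta\alpha^-=-*d*\alpha^-=-*d\alpha^-=0$, using $d\alpha^-=0$. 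Hence $\Delta\alpha^-=(d\delta+\delta d)\alpha^-=0$, so $\alpha^-$ is harmonic, which gives the claimed inclusion $L_2\mathcal{Z}^-\hookrightarrow\mathcal{H}^2_2$.

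For the second assertion, that all anti-invariant forms are self-dual, the point is that formula \eqref{antiinv} holds for \emph{any} anti-invariant form, not merely closed ones, and on a $4$-manifold it again collapses to $*\alpha^-=\alpha^-$ as computed above. So $\Omega^-(X)\subset\Lambda^+_g$, i.e.\ anti-invariant forms lie in the self-dual eigenbundle. This also follows directly by reading off the second identity in \eqref{usefuls}, which exhibits $\Lambda^-_J$ as a summand of $\Lambda^+_g$.

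The third assertion, that anti self-dual forms are invariant, I would obtain by the same decomposition \eqref{usefuls} together with a dimension or orthogonality count. The first identity in \eqref{usefuls} gives $\Lambda^+_J=\Span_\R\langle\omega\rangle\oplus\Lambda^-_g$, so in particular $\Lambda^-_g\subset\Lambda^+_J$; that is, every anti self-dual form is invariant. Alternatively one notes that $\omega$ itself is self-dual ($*\omega=\omega$ in dimension four), so $\Span_\R\langle\omega\rangle\subset\Lambda^+_g$, whence $\Lambda^-_g$, being $*$-orthogonal to the self-dual part, must sit inside the complementary eigenbundle pairing with $\Lambda^+_J$ as displayed.

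I do not expect a serious obstacle here, since this corollary is purely pointwise linear algebra once \eqref{usefuls} and \eqref{antiinv} are granted; the only thing to be careful about is the degenerate arithmetic $n-2=0$ in dimension four, where $\omega^{n-2}$ should be read as the constant function $1$ and the factor $\tfrac{1}{(n-2)!}$ equals $1$, so that \eqref{antiinv} genuinely reduces to the pointwise identity $*\alpha^-=\alpha^-$. The $L_2$ hypothesis plays no role in the pointwise statements and is needed only to place the resulting closed harmonic forms inside $\mathcal{H}^2_2$ as stated.
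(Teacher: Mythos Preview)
Your proposal is correct and follows exactly the route the paper intends: the corollary is stated there as an immediate consequence of \eqref{usefuls} and \eqref{antiinv}, and you have simply written out those consequences in the $n=2$ case. There is nothing to add.
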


For closed almost complex manifolds Dr\v{a}ghici, Li and  Zhang showed in \cite{DLZ} that there is a direct sum decomposition
$$
H^2_{dR}(X;\R)=H^+(X)\oplus H^-(X).
$$
In this section we generalize such a decomposition to the $L_2$ setting. The arguments follow closely those in \cite{DLZ}.\newline
First of all, by the $L_2$ Hodge decomposition and Lemma \ref{d-closed} the vector space $L_2H^2(X;\R)$ is isomorphic to the space $\mathcal{H}^2_2$ of $L_2$-harmonic forms on $X$,
which is a topological
subspace of of the Hilbert space $L_2\Omega^2(X)$. The following lemma is well known.
\begin{lemma}
$\mathcal{H}^2_2$ is a closed subspace of $L_2\Omega^2(X)$, and hence inherits the structure of a Hilbert space.
\end{lemma}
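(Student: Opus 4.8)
The plan is to read closedness off the orthogonal $L_2$-Hodge decomposition \eqref{L2Hodge}. Since the three summands in \eqref{L2Hodge} are mutually $L_2$-orthogonal (exactly the property already invoked in the proof of Lemma \ref{d-closed}), the harmonic space is the orthogonal complement of a subspace,
$$
\mathcal{H}^2_2=\Big(\overline{d(L_2\Omega^1(X))}\oplus\overline{\delta(L_2\Omega^3(X))}\Big)^{\perp}.
$$
The orthogonal complement of any subset of a Hilbert space is closed, so $\mathcal{H}^2_2$ is closed in $L_2\Omega^2(X)$ and thereby a Hilbert space in its own right. This is the shortest route and invokes only material already stated above.

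If one prefers an argument not presupposing \eqref{L2Hodge}, I would proceed directly. Take a sequence $\alpha_j\in\mathcal{H}^2_2$ with $\alpha_j\to\alpha$ in $L_2\Omega^2(X)$; the goal is $\alpha\in\mathcal{H}^2_2$. For every compactly supported smooth $2$-form $\phi$, formal self-adjointness of $\Delta$ (whose boundary terms vanish because $\phi$ has compact support, so completeness is not needed at this step) gives
$$
\langle\alpha,\Delta\phi\rangle=\lim_{j\to\infty}\langle\alpha_j,\Delta\phi\rangle=\lim_{j\to\infty}\langle\Delta\alpha_j,\phi\rangle=0,
$$
where the first equality follows from Cauchy--Schwarz together with $\Delta\phi\in L_2$ and $\alpha_j\to\alpha$. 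Hence $\alpha$ is a weak (distributional) solution of $\Delta\alpha=0$. By interior elliptic regularity for the Hodge Laplacian, $\alpha$ is smooth and genuinely harmonic, and it is already $L_2$, so $\alpha\in\mathcal{H}^2_2$.

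The only real subtlety -- the main obstacle -- is that harmonicity is a second-order condition and so does not pass to $L_2$-limits by soft means: one cannot simply differentiate an $L_2$-convergent sequence. Both routes resolve this in the same underlying way. The first hides the analysis inside Gromov's decomposition \eqref{L2Hodge}, whose orthogonality is precisely the statement that $\mathcal{H}^2_2$ is complemented, hence closed. The second exposes it: the passage from ``$\Delta\alpha=0$ weakly'' to ``$\alpha$ smooth and harmonic'' is exactly Weyl's lemma / elliptic regularity, and that is where the work actually sits. I would also note in passing that completeness of $(X,g)$ is what allows one to further identify $\mathcal{H}^2_2$ with the space of closed and coclosed $L_2$ forms (via an integration by parts of the type in Lemma \ref{stokes}), but that identification is not required for the closedness claim itself.
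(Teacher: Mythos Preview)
Your second argument is essentially the paper's own proof: take an $L_2$-convergent sequence of harmonic forms, test against compactly supported forms to obtain weak harmonicity of the limit, and then invoke elliptic regularity. The paper phrases the testing as $\langle\alpha,\delta\varphi\rangle=0$ and $\langle\alpha,d\varphi\rangle=0$ separately rather than $\langle\alpha,\Delta\phi\rangle=0$, but this is cosmetic.

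Your first argument, reading closedness directly off \eqref{L2Hodge} by writing $\mathcal{H}^2_2$ as an orthogonal complement, is a legitimate shortcut the paper does not take. It is not circular here because \eqref{L2Hodge} is quoted as an established result of Gromov, and an orthogonal complement in a Hilbert space is automatically closed. The trade-off is transparency: the paper's direct route makes explicit that the analytic content is precisely elliptic regularity, whereas your shortcut buries that same fact inside the proof of \eqref{L2Hodge}. Both are correct; the direct argument is more self-contained, the orthogonal-complement argument is quicker once the decomposition is on the table.
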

\begin{proof}
We recall the proof for the sake of completeness. Let $\{\alpha_j\}_{j\in\N}$ be a sequence in $\mathcal{H}^2_2$ such that
$\alpha_j\to \alpha$, for $j\to +\infty$ in $L_2$. Then, for every smooth compactly supported $2$-form $\varphi$ on $X$ we have:
$$
\langle\alpha,\delta\varphi\rangle =\lim_{j\to+\infty}\langle \alpha_j,\delta\varphi\rangle =0.
$$
In the same way,
$$
\langle\alpha,d\varphi\rangle =\lim_{j\to+\infty}\langle \alpha_j,\varphi\rangle =0,
$$
that is $\alpha$ is harmonic in the sense of distributions. Therefore, by elliptic regularity, $\alpha\in \mathcal{H}^2_2$.
\end{proof}
\begin{lemma}\label{self-dual}
Let $\alpha\in L_2\Omega^2(X)$ be self-dual and let $\alpha=\alpha_H+\lambda+\mu$ be its $L_2$-Hodge decomposition \eqref{L2Hodge}.
Then,
$$
\lambda^{sd}_g=\mu^{sd}_g \,,\qquad \lambda^{asd}_g=-\mu^{asd}_g
$$
where $\lambda=\lambda^{sd}_g+\lambda^{asd}_g$ and $\mu=\mu^{sd}_g+\mu^{asd}_g$denote the $*$-decomposition. Furthermore,
the forms
\begin{equation}\label{closed-self-dual}
\alpha+2\lambda^{asd}_g=\alpha_H +2\lambda.
\end{equation}
are closed.
\end{lemma}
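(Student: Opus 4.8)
The plan is to apply the Hodge star operator $*$ to the $L_2$-Hodge decomposition $\alpha = \alpha_H + \lambda + \mu$ and to exploit self-duality $*\alpha = \alpha$ together with the uniqueness of the decomposition \eqref{L2Hodge}. The three structural facts I need about $*$ on $X^4$ are: (a) $*$ is an isometry of $L_2\Omega^2(X)$, hence continuous and compatible with $L_2$-closures; (b) $*$ commutes with $\Delta$, so it preserves $\mathcal{H}^2_2$; and (c) $*$ interchanges the exact and co-exact pieces, i.e. it maps $\overline{d(L_2\Omega^1(X))}$ onto $\overline{\delta(L_2\Omega^3(X))}$ and conversely.

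For (c) I would first record the pointwise identities on a $4$-manifold: since $**=\mathrm{id}$ on $2$-forms and $\delta = -*d*$, a direct sign computation gives $*(d\eta)=\delta(*\eta)$ for every $1$-form $\eta$ and $*(\delta\nu)=d(-*\nu)$ for every $3$-form $\nu$. Because $*$ is an $L_2$-isometry it carries $L_2$-exact forms to $L_2$-co-exact forms and back, and passing to closures yields (c). With (a)--(c) in hand, applying $*$ to $\alpha=\alpha_H+\lambda+\mu$ gives $*\alpha = *\alpha_H + *\lambda + *\mu$ with $*\alpha_H\in\mathcal{H}^2_2$, $*\mu\in\overline{d(L_2\Omega^1(X))}$ and $*\lambda\in\overline{\delta(L_2\Omega^3(X))}$. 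Since $*\alpha=\alpha$, uniqueness of \eqref{L2Hodge} forces
$$
\alpha_H = *\alpha_H, \qquad \lambda = *\mu, \qquad \mu = *\lambda.
$$

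Now split $\lambda=\lambda^{sd}_g+\lambda^{asd}_g$. From $\mu=*\lambda=\lambda^{sd}_g-\lambda^{asd}_g$ I read off $\mu^{sd}_g=\lambda^{sd}_g$ and $\mu^{asd}_g=-\lambda^{asd}_g$, which is the first assertion. For the closedness statement, substituting $\mu=\lambda^{sd}_g-\lambda^{asd}_g$ into the decomposition gives $\alpha=\alpha_H+2\lambda^{sd}_g$, whence $\alpha+2\lambda^{asd}_g=\alpha_H+2\lambda$, establishing \eqref{closed-self-dual} as an identity. Both terms on the right are closed: $\alpha_H$ is $L_2$-harmonic on a complete manifold, hence $d\alpha_H=0$ by the standard integration-by-parts argument (justified by completeness); and every $\lambda\in\overline{d(L_2\Omega^1(X))}$ is a distributional limit of exact forms, hence weakly closed, since for any compactly supported test $3$-form $\varphi$ one has $\langle\lambda,\delta\varphi\rangle=\lim_j\langle d\eta_j,\delta\varphi\rangle=0$. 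Therefore $\alpha_H+2\lambda$ is closed.

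The main obstacle, and really the only subtle point, is step (c): carrying out the sign bookkeeping in $*(d\eta)=\delta(*\eta)$ correctly and ensuring that the interchange of the two subspaces survives the $L_2$-closure. This is precisely where the completeness of $g$ (for the orthogonal Hodge decomposition and for $\alpha_H$ being genuinely closed) and the continuity of $*$ on $L_2$ enter; once these structural properties of $*$ are in place, the remainder of the argument is purely formal.
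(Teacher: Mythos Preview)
Your proof is correct and follows essentially the same route as the paper's: apply $*$ to the $L_2$-Hodge decomposition, use that $*$ is an $L_2$-isometry swapping the closures of the exact and co-exact pieces, and invoke uniqueness of the decomposition to conclude $*\lambda=\mu$, $*\mu=\lambda$. If anything you are more careful than the paper: you spell out the pointwise identity $*(d\eta)=\delta(*\eta)$ and explicitly justify why $\alpha_H+2\lambda$ is closed (harmonic forms are closed on a complete manifold, and $\lambda$ is weakly closed as an $L_2$-limit of exact forms), whereas the paper simply asserts ``Then \eqref{closed-self-dual} follows.''
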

\begin{proof}
By assumption $*\alpha=\alpha$. Hence, if $\alpha=\alpha_H+\lambda+\mu$, where $\lambda\in\overline{d(L_2\Omega^1(X))}$,
$\mu\in\overline{\delta(L_2\Omega^3(X))}$ then,
$$
*\alpha =*\alpha_H+*\lambda+*\mu =\alpha_H+\lambda+\mu
$$
Now, if $\{\lambda_j\}_{j\in\N}$, $\{\mu_j\}_{j\in\N}$ are sequences in $d(L_2\Omega^1(X))$, respectively
$\delta(L_2\Omega^3(X))$ such that
$$
\lambda_j=d\lambda'_j,\quad \mu_j=d\mu'_j, \quad \lambda_j\in L_2\Omega^1(X), \mu_j\in L_2\Omega^3(X), \quad
d\lambda'_j\to \lambda, d\mu'_j\to \mu
$$
in the $L_2$-norm, then
$$
\lVert\lambda_j-\lambda\lVert_{L_2(X)}=\lVert d \lambda'_j-\lambda\lVert_{L_2(X)}=
\lVert *d\lambda'_j-*\lambda\lVert_{L_2(X)},
$$
so that $*d\lambda'_j\to *\lambda$ in $L_2$ and, similarly, $*d\mu'_j\to *\mu$. Therefore, since
$$
*d\lambda'_j\in \delta(L_2\Omega^{3}(X),\quad *\delta\mu'_j\in d(L_2\Omega^{2}(X),
$$
we obtain that
$$
*\lambda\in L_2\Omega^2(X), \quad*\mu\in L_2\Omega^2(X),\quad
*\lambda \in \overline{\delta(L_2\Omega^{3}(X)},\quad *\mu\in\overline{d(L_2\Omega^{2}(X)}.
$$
Therefore, by the uniqueness of the $L_2$-Hodge decomposition,
$$
*\lambda=\mu,\qquad *\mu=-\lambda.
$$
Then \eqref{closed-self-dual} follows.
\end{proof}
\begin{lemma}
The following holds
$$
\overline{L_2H^+(X)}\cap \overline{L_2H^-(X)}=\{0\}.
$$
\end{lemma}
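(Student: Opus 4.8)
The plan is to transport the whole question into the Hilbert space $\mathcal{H}^2_2$ of $L_2$-harmonic $2$-forms, where the cup product becomes a continuous bilinear form, and then to play its positivity on self-dual forms against its vanishing between invariant and anti-invariant classes. First I would recall that by the $L_2$-Hodge decomposition \eqref{L2Hodge} together with Lemma \ref{d-closed} the map $[\alpha]\mapsto\alpha_H$ identifies $L_2H^2(X;\R)$ with $\mathcal{H}^2_2$. Under this identification $\overline{L_2H^-(X)}$ lands inside the anti-invariant harmonic forms: a closed anti-invariant $L_2$-form is already harmonic by Corollary \ref{useful}, so $L_2H^-(X)$ corresponds to $L_2\mathcal{Z}^-$, and since $J\alpha=-\alpha$ is a closed pointwise condition in $L_2$ its closure again consists of anti-invariant harmonic forms, which by \eqref{antiinv} are all self-dual. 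I would then introduce the symmetric pairing
$$
Q(\beta_1,\beta_2)=\int_X\beta_1\wedge\beta_2=\langle\beta_1,*\beta_2\rangle,\qquad \beta_1,\beta_2\in L_2\Omega^2(X),
$$
which is well defined and jointly continuous by Cauchy--Schwarz. On self-dual forms $*\beta=\beta$, so $Q(\beta,\beta)=\lVert\beta\rVert_{L_2(X)}^2$, and in particular $Q(\beta,\beta)=0$ forces $\beta=0$.

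The heart of the argument is to show that $Q$ vanishes identically on $\overline{L_2H^+(X)}\times\overline{L_2H^-(X)}$. Since $Q$ is continuous in each slot it suffices to verify this on the dense subspaces $L_2H^+(X)$ and $L_2H^-(X)$. So I would take $\alpha^+\in L_2\mathcal{Z}^+$ with Hodge decomposition $\alpha^+=(\alpha^+)_H+\lambda$, where $\lambda\in\overline{d(L_2\Omega^1(X))}$ (the $\delta$-exact component vanishes by Lemma \ref{d-closed}), and $\alpha^-\in L_2\mathcal{Z}^-$, and compute
$$
Q\big((\alpha^+)_H,\alpha^-\big)=\int_X\alpha^+\wedge\alpha^- -\int_X\lambda\wedge\alpha^-.
$$
The first integrand vanishes pointwise: extending $J$ to forms and using that $J$ acts trivially on $4$-forms (as $\det J=1$ in real dimension $4$), for $\beta\in\Lambda^+_J$ and $\gamma\in\Lambda^-_J$ one gets $\beta\wedge\gamma=J\beta\wedge J\gamma=-\beta\wedge\gamma$, so $\Lambda^+_J\wedge\Lambda^-_J=0$ (equivalently this is immediate from \eqref{usefuls} and the pointwise orthogonality of the eigenspaces). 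For the second term, $\int_X\lambda\wedge\alpha^-=\langle\lambda,*\alpha^-\rangle=\langle\lambda,\alpha^-\rangle$ since $\alpha^-$ is self-dual, and this is $0$ by the $L_2$-orthogonality in \eqref{L2Hodge} of $\mathcal{H}^2_2$ against $\overline{d(L_2\Omega^1(X))}$. Hence $Q((\alpha^+)_H,\alpha^-)=0$, and by continuity $Q$ vanishes on all of $\overline{L_2H^+(X)}\times\overline{L_2H^-(X)}$.

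Finally, given $\mathfrak a\in\overline{L_2H^+(X)}\cap\overline{L_2H^-(X)}$, I would apply the vanishing just proved with both arguments equal to $\mathfrak a$ to get $Q(\mathfrak a,\mathfrak a)=0$; but $\mathfrak a\in\overline{L_2H^-(X)}$ is self-dual, so $Q(\mathfrak a,\mathfrak a)=\lVert\mathfrak a\rVert_{L_2(X)}^2$, forcing $\mathfrak a=0$. The only genuine obstacle is the passage to the closures: the two vanishing computations give $L_2H^+(X)\cap L_2H^-(X)=\{0\}$ at once, but harmonic representatives of invariant classes are not themselves invariant and the pointwise self-dual/anti-invariant splitting is not preserved under $L_2$-limits, so the argument must be phrased through the continuous pairing $Q$ rather than through representatives.
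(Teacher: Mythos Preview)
Your proof is correct and follows essentially the same route as the paper: both arguments use the pointwise vanishing $\Lambda^+_J\wedge\Lambda^-_J=0$ together with the self-duality of anti-invariant forms (Corollary \ref{useful}) and the $L_2$-orthogonality of $\mathcal{H}^2_2$ and $\overline{d(L_2\Omega^1(X))}$ to show that harmonic representatives of invariant and anti-invariant classes are $L_2$-orthogonal, then pass to the closures by continuity. Your explicit introduction of the continuous bilinear form $Q$ and the observation that $\overline{L_2H^-(X)}$ itself consists of anti-invariant (hence self-dual) harmonic forms makes the limiting step slightly more transparent, but the content is the same.
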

\begin{proof}
Let $\{\alpha_i\}_{i\in\N}$, $\{\beta_j\}_{i\in\N}$  be sequences of harmonic forms in $L_2\Omega^2(X)$
with $[\alpha_i] \in L_2 H^+(X)$ and $[\beta_i] \in L_2 H^-(X)$ such that
$\alpha_i\to \alpha$, for $i\to +\infty$ in $L_2$ and $\beta_i\to \alpha$, for $i\to +\infty$ in $L_2$. Then,
using Lemmas \ref{d-closed} and Corollary \ref{useful} we can write
$$
\alpha_i=\theta^+_i+\lambda_i\,,\qquad \beta_i=\eta^-_i
$$
where $\theta^+_i\in L_2\mathcal{Z}^+$, $\eta^-_i\in L_2\mathcal{Z}^-$, $\lambda_i\in\overline{dL_2(\Omega^1(X))}$. Then, as anti-invariant
forms are self-dual we have
$$
0=\int_X\theta^+_i\wedge\eta^-_i=\int_X\theta^+_i\wedge *\eta^-_i=\int_X\alpha_i\wedge *\beta_i=\langle \alpha_i,\beta_i\rangle.
$$
Taking a limit this implies
$$
\lVert \alpha\lVert^2_{L_2(X)}=0,
$$
that is, $\overline{L_2H^+(X)}\cap \overline{L_2H^-(X)}=\{0\}$.
\end{proof}
\begin{lemma}\label{noint}
$$
(L_2H^+(X)\oplus L_2H^-(X))^{\perp}=\{0\}.
$$
\end{lemma}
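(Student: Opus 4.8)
The plan is to use the identification $L_2H^2(X;\R)\cong\mathcal{H}^2_2$ and to take an arbitrary $\alpha\in\mathcal{H}^2_2$ that is $L_2$-orthogonal to every element of $L_2H^+(X)$ and of $L_2H^-(X)$, with the aim of showing $\alpha=0$. First I would split off the anti-self-dual part. Since $*$ is a pointwise isometry commuting with $\Delta$, both $\alpha^{sd}_g$ and $\alpha^{asd}_g$ lie in $\mathcal{H}^2_2$. By Corollary \ref{useful} the anti-self-dual form $\alpha^{asd}_g$ is invariant, and being harmonic it is closed; hence it is a closed invariant $L_2$ form with $[\alpha^{asd}_g]\in L_2H^+(X)$ and harmonic representative $\alpha^{asd}_g$ itself. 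Orthogonality then gives $0=\langle\alpha,\alpha^{asd}_g\rangle=\lVert\alpha^{asd}_g\rVert^2_{L_2(X)}$, so $\alpha^{asd}_g=0$ and $\alpha$ is self-dual.

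Now $\alpha$ is self-dual and harmonic. By the relations \eqref{usefuls} a self-dual invariant form is a pointwise multiple of $\omega$, so I can write $\alpha=f\omega+\alpha^-$ with $f$ a function and $\alpha^-\in\Omega^-(X)$ anti-invariant (hence automatically self-dual); both summands are in $L_2$ as pointwise orthogonal projections of $\alpha$. The crucial step is to show $f\equiv 0$. To this end I would apply Lemma \ref{self-dual} to the self-dual $L_2$ form $f\omega$: writing its Hodge decomposition $f\omega=(f\omega)_H+\lambda'+\mu'$, identity \eqref{closed-self-dual} produces the closed form $D:=(f\omega)_H+2\lambda'=f\omega+2(\lambda')^{asd}_g$. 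The point is that the correcting term $(\lambda')^{asd}_g$ is anti-self-dual, hence invariant by Corollary \ref{useful}, so $D=f\omega+2(\lambda')^{asd}_g$ is a \emph{closed invariant} $L_2$ form. Therefore $[D]\in L_2H^+(X)$, and since $2\lambda'\in\overline{d(L_2\Omega^1(X))}$ we have $[D]=[(f\omega)_H]$, whose harmonic representative is $(f\omega)_H$; thus $(f\omega)_H\in L_2H^+(X)$ and orthogonality yields $\langle\alpha,(f\omega)_H\rangle=0$.

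Against this I would compute $\langle\alpha,(f\omega)_H\rangle$ directly. Because $\alpha$ is harmonic it is $L_2$-orthogonal to $\overline{d(L_2\Omega^1(X))}$ by \eqref{L2Hodge}, so $\langle\alpha,\lambda'\rangle=0$ and hence $\langle\alpha,(f\omega)_H\rangle=\langle\alpha,D\rangle$. Expanding $D=f\omega+2(\lambda')^{asd}_g$ and using that $\alpha$ is self-dual (so $\langle\alpha,(\lambda')^{asd}_g\rangle=0$) together with the pointwise orthogonality of the invariant $f\omega$ to the anti-invariant $\alpha^-$, I get $\langle\alpha,D\rangle=\langle\alpha,f\omega\rangle=\lVert f\omega\rVert^2_{L_2(X)}$. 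Comparing the two evaluations forces $\lVert f\omega\rVert^2_{L_2(X)}=0$, i.e. $\alpha=\alpha^-$. Finally $\alpha$ is now a closed (harmonic) anti-invariant $L_2$ form, so $[\alpha]\in L_2H^-(X)$ with harmonic representative $\alpha$, and orthogonality to $L_2H^-(X)$ gives $\lVert\alpha\rVert^2_{L_2(X)}=0$. Together with the previous lemma and the orthogonality $L_2H^+(X)\perp L_2H^-(X)$ already established, this density statement will yield $L_2H^2(X;\R)=\overline{L_2H^+(X)}\oplus\overline{L_2H^-(X)}$.

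The main obstacle, and the reason Lemma \ref{self-dual} is needed, is that the invariant and anti-invariant parts of the closed form $\alpha$ are not themselves closed, so $\alpha^-$ is not directly a representative of a class in $L_2H^-(X)$ and $f\omega$ is not directly one in $L_2H^+(X)$. The device that resolves this is the observation that the Hodge-theoretic correction supplied by \eqref{closed-self-dual} is anti-self-dual and therefore stays inside the invariant forms; verifying this invariance and the consequent membership $(f\omega)_H\in L_2H^+(X)$ is the heart of the argument, while the matching direct computation of $\langle\alpha,(f\omega)_H\rangle$ is then a routine $L_2$-orthogonality calculation.
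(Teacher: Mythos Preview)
Your argument is correct and follows the same overall strategy as the paper: identify $L_2H^2$ with $\mathcal{H}^2_2$, kill the anti-self-dual part of $\alpha$ by testing against it in $L_2H^+(X)$, then use Lemma~\ref{self-dual} to manufacture a closed invariant form whose pairing with $\alpha$ detects the $\omega$-component.

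The one genuine difference is in how Lemma~\ref{self-dual} is invoked. The paper localizes first: it assumes the coefficient $c$ in $\alpha=c\omega+\theta^-$ is somewhere nonzero, chooses a bump function $a$ supported where $c>0$, and applies Lemma~\ref{self-dual} to the compactly supported form $a\Phi\omega$ (with $\Phi=ca$) to derive $\int_X\Phi^2\,\mathrm{Vol}_X=0$, a contradiction. You instead apply Lemma~\ref{self-dual} directly to the global form $f\omega$, which is legitimate because $f\omega$ is self-dual and lies in $L_2$ as an orthogonal summand of $\alpha$. Your route is shorter: it avoids the bump function, the pointwise positivity argument, and the by-contradiction framing, and it yields $\lVert f\omega\rVert_{L_2}^2=0$ in one stroke. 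You also spell out the final step---that once $\alpha=\alpha^-$ one tests against $\alpha$ itself in $L_2H^-(X)$ to get $\alpha=0$---which the paper leaves implicit. The paper's localization buys nothing extra here; it appears to be inherited from the compact-case argument in \cite{DLZ}.
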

The orthogonal complement is defined by recalling that $L_2 H^{\pm}(X)$ can be thought of as subspaces of the Hilbert space $\mathcal{H}^2_2$.

\begin{proof}
By contradiction: assume that there exists $[\alpha]\in L_2H^2(X;\R)$ such that, for every $[\theta^+]+[\theta^-]\in L_2H^+(X)\oplus L_2H^-(X)$,
$$
<\alpha,\theta^+ +\theta^->=0.
$$
To compute the inner product we assume that $\alpha$, $\theta^+$ and $\theta^-$ are harmonic representatives.

By taking $\theta^+$ to be anti self-dual part of $\alpha$ (which is invariant by Corollary \ref{useful})
and $\theta^-=0$ we see immediately that the anti self-dual part of $\alpha$ must vanish, that is, $\alpha$ is self-dual.


Therefore, by \eqref{usefuls} we have
$$
\alpha =c\omega + \theta^-,
$$
where $c$ is a function on $X$ such that $c\neq 0$ and $\theta^-\in \Omega^-(X)$.
\newline
We may assume that there exists $x\in X$ such that $c(x) >0$. Let $a$ be a bump function and $W$ be a compact neighborhood of $x$ such that
$a\vert_W =1$ and
$$
\hbox{\rm supp}\,a\subset\{x\in X\,\,\,\vert \,\,\, c(x)>0\}.
$$
Let $\Phi:X\to \R$ be defined as
$$
\Phi(x)=g(\alpha, a\omega)(x).
$$
Then
$$
\Phi(x)=g(\alpha, a\omega)(x)=g(c\omega +\theta^-,a\omega)(x)=c(x)a(x)\geq 0.
$$
Now we apply Lemma \ref{self-dual} to the self-dual form $a\Phi\omega$. Let $\lambda$ be the exact part of the $L_2$ Hodge decomposition of $a\Phi\omega$.
Then Lemma \ref{self-dual} gives
$$
(a\Phi\omega)_H+2 \lambda=a\Phi\omega + 2\lambda^{asd}_g \in L_2H^+(X).
$$
Therefore, using Lemma \ref{stokes} and noting that self-dual and anti self-dual forms are pointwise $g$-orthogonal, we obtain
$$
\begin{array}{ll}
0 &=<\alpha, (a\Phi\omega + 2\lambda^{asd}_g)_H> = \int_X \alpha \wedge(a \Phi \omega+2\lambda^{asd}_g)
\\[10pt]
&=\int_X \alpha \wedge *(a \Phi \omega+2\lambda^{asd}_g)=\int_Xg(\alpha,a\Phi\omega)-2g(\alpha,\lambda^{asd}_g)\hbox{\rm Vol}_X\\[10pt]
&=\int_X g(\alpha,a \Phi \omega)\hbox{\rm Vol}_X=\int_X\Phi^2\hbox{\rm Vol}_X.
\end{array}
$$
Hence $\Phi=0$ and $c(x)=0$. This gives a contradiction.
\end{proof}
\begin{lemma}\label{fulls} We have
$$
L_2H^2(X;\R)=\overline{L_2H^+(X)\oplus L_2H^-(X)}.
$$
\end{lemma}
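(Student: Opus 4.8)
The plan is to recognise the statement as a purely Hilbert-space assertion and to reduce it to the triviality of an orthogonal complement already proved in Lemma \ref{noint}. Recall that, by the $L_2$-Hodge decomposition \eqref{L2Hodge} together with Lemma \ref{d-closed}, the reduced cohomology $L_2H^2(X;\R)$ is identified with the space $\mathcal{H}^2_2$ of $L_2$-harmonic $2$-forms, which is a closed subspace of $L_2\Omega^2(X)$ and hence a Hilbert space in its own right. Under this identification $L_2H^+(X)$ and $L_2H^-(X)$ are linear subspaces, so their sum $S:=L_2H^+(X)\oplus L_2H^-(X)$ is a linear subspace of $\mathcal{H}^2_2$, and the assertion to be proved is exactly that $\overline{S}=\mathcal{H}^2_2$.

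The single decisive step is the standard duality between density and orthogonality in a Hilbert space: for any linear subspace $S\subseteq\mathcal{H}^2_2$ one has $\overline{S}=(S^{\perp})^{\perp}$. First I would invoke Lemma \ref{noint}, which states precisely that $S^{\perp}=\{0\}$. Then $\overline{S}=(S^{\perp})^{\perp}=\{0\}^{\perp}=\mathcal{H}^2_2$, and transporting this back through the isomorphism $\mathcal{H}^2_2\cong L_2H^2(X;\R)$ gives $\overline{L_2H^+(X)\oplus L_2H^-(X)}=L_2H^2(X;\R)$, as required.

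I do not expect any genuine analytic obstacle at this stage, since all the substantive work has been absorbed into Lemma \ref{noint}, where completeness of $g$, Stokes' theorem (Lemma \ref{stokes}), the self-duality of anti-invariant forms (Corollary \ref{useful}) and the behaviour of self-dual forms under the Hodge decomposition (Lemma \ref{self-dual}) were combined to exclude a nonzero harmonic class orthogonal to both $L_2H^+(X)$ and $L_2H^-(X)$. The only point demanding care is bookkeeping: one must ensure that the orthogonal complement in Lemma \ref{noint} is taken inside the Hilbert space $\mathcal{H}^2_2$, equivalently that harmonic representatives are used throughout, which is exactly the convention fixed in the remark following that lemma. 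Once this is kept straight, the argument for Lemma \ref{fulls} is a one-line application of the closed-subspace/orthogonal-complement duality, and combining it with the two preceding lemmas yields the full decomposition $L_2H^2(X;\R)=\overline{L_2H^+(X)}\oplus\overline{L_2H^-(X)}$ announced in the introduction.
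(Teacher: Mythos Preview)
Your argument is correct and is essentially identical to the paper's own proof: both apply the Hilbert-space identity $\overline{S}=(S^{\perp})^{\perp}$ inside $\mathcal{H}^2_2$ and invoke Lemma~\ref{noint} to get $S^{\perp}=\{0\}$. The only difference is presentational --- you spell out the identification $L_2H^2(X;\R)\cong\mathcal{H}^2_2$ and the conventions more carefully than the paper does.
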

\begin{proof}
$$\overline{L_2H^+(X)\oplus L_2H^-(X)} = ((L_2H^+(X)\oplus L_2H^-(X))^{\perp})^{\perp} = \{0\}^{\perp} = \mathcal{H}^2_2$$
using Lemma \ref{noint}.
\end{proof}
\begin{lemma}\label{closedsum}
The subspace $\overline{L_2H^+(X)}\oplus\overline{L_2H^-(X)}$ is closed in $L_2H^2(X;\R)$.
\end{lemma}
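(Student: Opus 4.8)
The plan is to reduce the statement to the elementary Hilbert space fact that the algebraic direct sum of two \emph{orthogonal} closed subspaces is automatically closed; the only genuine content is to verify that $\overline{L_2H^+(X)}$ and $\overline{L_2H^-(X)}$ are mutually orthogonal inside the Hilbert space $\mathcal{H}^2_2\cong L_2H^2(X;\R)$. Throughout I identify each class in $L_2H^\pm(X)$ with its unique harmonic representative in $\mathcal{H}^2_2$; for a class in $L_2H^-(X)$ represented by $\alpha^-\in L_2\mathcal{Z}^-$ this representative is $\alpha^-$ itself, since closed anti-invariant forms are already harmonic by Corollary \ref{useful}.

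First I would establish that $L_2H^+(X)\perp L_2H^-(X)$ in $\mathcal{H}^2_2$. Let $w$ be the harmonic representative of a class in $L_2H^+(X)$, so that $w=\theta^+-\lambda$ with $\theta^+\in L_2\mathcal{Z}^+$ and $\lambda\in\overline{d(L_2\Omega^1(X))}$ the exact part of the $L_2$-Hodge decomposition of $\theta^+$ (the co-exact part vanishes by Lemma \ref{d-closed}). Let $\alpha^-\in L_2\mathcal{Z}^-$. Then $\langle w,\alpha^-\rangle=\langle\theta^+,\alpha^-\rangle-\langle\lambda,\alpha^-\rangle$. The second term vanishes by the $L_2$-orthogonality of the decomposition \eqref{L2Hodge}, since $\alpha^-$ is harmonic while $\lambda$ is exact. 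For the first term, anti-invariant forms are self-dual (Corollary \ref{useful}), so $\langle\theta^+,\alpha^-\rangle=\int_X\theta^+\wedge *\alpha^-=\int_X\theta^+\wedge\alpha^-$, and this integrand vanishes pointwise because in dimension four the wedge of an invariant $2$-form with an anti-invariant $2$-form is zero. Hence $\langle w,\alpha^-\rangle=0$. This is precisely the computation already carried out in the proof that $\overline{L_2H^+(X)}\cap\overline{L_2H^-(X)}=\{0\}$, now read as a genuine orthogonality statement valid for \emph{arbitrary} harmonic representatives rather than only for two sequences with a common limit. By continuity of the inner product the orthogonality passes to the closures, giving $\overline{L_2H^+(X)}\perp\overline{L_2H^-(X)}$.

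Finally I would invoke the standard fact that if $V$ and $W$ are closed subspaces of a Hilbert space with $V\perp W$, then $V\oplus W$ is closed. Indeed, if $v_k+w_k$ is Cauchy with $v_k\in V$ and $w_k\in W$, then the Pythagorean identity $\lVert(v_k-v_m)+(w_k-w_m)\rVert^2=\lVert v_k-v_m\rVert^2+\lVert w_k-w_m\rVert^2$ shows that $\{v_k\}$ and $\{w_k\}$ are separately Cauchy, hence converge to some $v\in V$ and $w\in W$ by closedness, so the limit $v+w$ lies in $V\oplus W$. Applying this with $V=\overline{L_2H^+(X)}$ and $W=\overline{L_2H^-(X)}$ yields the claim.

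The genuine obstacle is not the closedness argument, which is routine, but the orthogonality, and that in turn rests entirely on the pointwise identity $\theta^+\wedge\alpha^-=0$ for invariant $\theta^+$ and anti-invariant $\alpha^-$ together with the self-duality of anti-invariant forms; both facts are special to dimension four. Once orthogonality is secured the Hilbert space geometry does the rest, and as a bonus one sees that the decomposition is in fact an \emph{orthogonal} direct sum, which simultaneously re-proves the trivial-intersection lemma.
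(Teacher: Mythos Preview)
Your proof is correct and follows essentially the same route as the paper: the paper's one-line argument simply asserts that $\overline{L_2H^+(X)}$ and $\overline{L_2H^-(X)}$ are orthogonal (implicitly relying on the computation $\langle\alpha_i,\beta_i\rangle=0$ already carried out in the trivial-intersection lemma) and then observes that a Cauchy sequence in the sum splits into two Cauchy sequences, which is exactly your Pythagorean argument. You have merely made the orthogonality step explicit rather than citing the earlier lemma.
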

\begin{proof}
As $\overline{L_2 H^+(X)}$ and $\overline{L_2 H^-(X)}$ are orthogonal, we can check that a sequence $\{(\alpha_i, \beta_i)\}$ in the direct sum is Cauchy if and only if both $\{\alpha_i\}$ and $\{\beta_i\}$ are Cauchy.
\end{proof}
\begin{theorem} \label{decomp}
Let $(X,J,g)$ be an complete almost Hermitian $4$-dimensional manifold.
Then, we have the following decomposition
$$
L_2H^2(X;\R)=\overline{L_2H^+(X)}\oplus\overline{L_2H^-(X)}.
$$
\end{theorem}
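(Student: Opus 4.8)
The plan is simply to assemble the preceding lemmas, so the final argument is short. First I would invoke the identification $L_2H^2(X;\R)\cong\mathcal{H}^2_2$ (via the $L_2$-Hodge decomposition and Lemma \ref{d-closed}), so that all the subspaces involved are regarded as subspaces of the Hilbert space $\mathcal{H}^2_2$ and all closures and orthogonal complements are taken there. Lemma \ref{fulls} then already gives
$$
L_2H^2(X;\R)=\overline{L_2H^+(X)\oplus L_2H^-(X)},
$$
so the whole problem reduces to showing that the closure of the algebraic sum equals the sum of the closures, namely $\overline{L_2H^+(X)\oplus L_2H^-(X)}=\overline{L_2H^+(X)}\oplus\overline{L_2H^-(X)}$.

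I would prove this equality by double inclusion. For the inclusion $\supseteq$, each of $\overline{L_2H^+(X)}$ and $\overline{L_2H^-(X)}$ is contained in the closed subspace $\overline{L_2H^+(X)\oplus L_2H^-(X)}=\mathcal{H}^2_2$, hence so is their sum. For the reverse inclusion, the algebraic sum $L_2H^+(X)\oplus L_2H^-(X)$ clearly sits inside $\overline{L_2H^+(X)}\oplus\overline{L_2H^-(X)}$; since the latter is closed by Lemma \ref{closedsum}, passing to closures yields $\overline{L_2H^+(X)\oplus L_2H^-(X)}\subseteq\overline{L_2H^+(X)}\oplus\overline{L_2H^-(X)}$. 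Combining the two inclusions gives equality, and the sum is genuinely direct because $\overline{L_2H^+(X)}$ and $\overline{L_2H^-(X)}$ intersect trivially --- indeed they are mutually orthogonal in $\mathcal{H}^2_2$, as already exploited in Lemma \ref{closedsum}.

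The theorem itself therefore presents no real obstacle; the content lives entirely in the lemmas it cites. The genuinely hard input is Lemma \ref{noint}, which feeds Lemma \ref{fulls}: its proof reduces a putative element of the orthogonal complement to a self-dual form, writes it as $c\omega+\theta^-$ via \eqref{usefuls}, and then uses Lemma \ref{self-dual} together with Stokes' theorem (Lemma \ref{stokes}) to force $c\equiv 0$. If I were to anticipate where care is needed in the assembly, it is only the bookkeeping of ensuring that ``closure'' always refers to the $L_2$-topology on $\mathcal{H}^2_2$ and that the orthogonal complement in Lemma \ref{noint} is taken in that same Hilbert space, so that the formal manipulation $\overline{A+B}=(\,(A+B)^\perp\,)^\perp$ used in Lemma \ref{fulls} is legitimate.
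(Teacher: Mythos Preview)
Your proposal is correct and follows essentially the same route as the paper: invoke Lemma~\ref{fulls} to get $L_2H^2(X;\R)=\overline{L_2H^+(X)\oplus L_2H^-(X)}$, then use Lemma~\ref{closedsum} (closedness of $\overline{L_2H^+(X)}\oplus\overline{L_2H^-(X)}$) to conclude that this closed sum contains, hence equals, the closure of the algebraic sum. The paper's write-up is a single sentence where yours spells out the double inclusion, but the argument is the same.
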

\begin{proof}Indeed, by Lemma \ref{closedsum} the direct sum is closed and so by Lemma \ref{fulls} contains $L_2H^2(X;\R)=\overline{L_2H^+(X)\oplus L_2H^-(X)}$.

\end{proof}

Let now $J$ be an almost complex structure on a manifold $X$ of any dimension. The following Proposition provides a cohomological obstruction on $J$ in order that there
exists a compatible symplectic form $\omega$ such that the associated Hermitian metric $g_J(\cdot,\cdot)=\omega(\cdot,J\cdot)$ is complete.
\begin{prop}
Let $(X,J,\omega,g_J)$ be an almost K\"ahler manifold such that $g_J$ is complete. Then
$$
L_2H^+(X)\cap L_2H^-(X)=\{0\}.
$$
\end{prop}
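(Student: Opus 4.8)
The plan is to exploit the intersection pairing together with the fact, recorded in Corollary \ref{useful}, that anti-invariant forms are self-dual while anti-self-dual forms are invariant. Let $\mathfrak{a}\in L_2H^+(X)\cap L_2H^-(X)$. By definition there exist closed $L_2$-forms $\alpha^+\in L_2\mathcal{Z}^+$ and $\alpha^-\in L_2\mathcal{Z}^-$ with $[\alpha^+]=[\alpha^-]=\mathfrak{a}$ in $L_2H^2(X;\R)$. Since they represent the same reduced $L_2$-cohomology class, their difference lies in $\overline{d(L_2\Omega^1(X))}$: we may write $\alpha^+=\alpha^-+\eta$, where $\eta=\lim_j d\eta_j$ in $L_2$ for primitives $\eta_j$ with $d\eta_j\in L_2\Omega^2(X)$.

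The key computation is to evaluate $\int_X \alpha^+\wedge\alpha^-$ in two ways. On the one hand, $\alpha^+$ is $J$-invariant and $\alpha^-$ is $J$-anti-invariant, so by type reasons (or equivalently by \eqref{usefuls}, since $\Lambda^+_J$ and $\Lambda^-_J$ are pointwise wedge-orthogonal into the top exterior power) the integrand $\alpha^+\wedge\alpha^-$ vanishes identically; hence $\int_X\alpha^+\wedge\alpha^-=0$. On the other hand, substituting $\alpha^+=\alpha^-+\eta$ and using that $\alpha^-$ is self-dual, so that $\alpha^-\wedge\alpha^-=\alpha^-\wedge *\alpha^-=|\alpha^-|^2\hbox{\rm Vol}_X$, we obtain
$$
0=\int_X\alpha^+\wedge\alpha^- = \lVert\alpha^-\lVert^2_{L_2(X)}+\int_X\eta\wedge\alpha^-.
$$
It therefore suffices to show that $\int_X\eta\wedge\alpha^-=0$, for then $\lVert\alpha^-\lVert_{L_2(X)}=0$, forcing $\alpha^-=0$ and hence $\mathfrak{a}=0$.

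The main obstacle is precisely the vanishing of this last integral, where the completeness hypothesis enters through Lemma \ref{stokes}. First I would verify that each $\eta_j\wedge\alpha^-$ lies in $L_1\Omega^3(X)$, via the Cauchy--Schwarz estimate $\lVert\eta_j\wedge\alpha^-\lVert_{L_1(X)}\leq\lVert\eta_j\lVert_{L_2(X)}\lVert\alpha^-\lVert_{L_2(X)}$, and similarly that $d(\eta_j\wedge\alpha^-)=d\eta_j\wedge\alpha^-$ (using $d\alpha^-=0$) is in $L_1\Omega^4(X)$. Then Lemma \ref{stokes} yields $\int_X d\eta_j\wedge\alpha^-=\langle d\eta_j,\alpha^-\rangle=0$ for every $j$, where I have used that $\alpha^-$ is self-dual to rewrite the wedge product as an inner product. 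Finally, since $d\eta_j\to\eta$ in $L_2$ and $\alpha^-\in L_2$, the inner products $\langle d\eta_j,\alpha^-\rangle$ converge to $\langle\eta,\alpha^-\rangle=\int_X\eta\wedge\alpha^-$ by another Cauchy--Schwarz argument; the limit is $0$, as required. I expect the delicate point to be the careful bookkeeping of the $L_1$ integrability needed to legitimately invoke Stokes' theorem on the noncompact complete manifold, rather than the algebra.
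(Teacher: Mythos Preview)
Your argument is correct in real dimension $4$, but the proposition is stated for almost K\"ahler manifolds of \emph{arbitrary} dimension (see the sentence immediately preceding it in the paper). In dimension $2n>4$ both of your key pointwise identities fail: $\alpha^+\wedge\alpha^-$ is a $4$-form of type $(3,1)+(1,3)$, which need not vanish; and $\alpha^-\wedge\alpha^-$ is a $4$-form, not $\alpha^-\wedge *\alpha^-$. Likewise Corollary~\ref{useful}, which you invoke for the self-duality of $\alpha^-$, is a four-dimensional statement.

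The paper's proof is the natural higher-dimensional upgrade of exactly your strategy: one wedges everything with $\omega^{n-2}$ and uses formula~\eqref{antiinv}, $*\alpha^-=\tfrac{1}{(n-2)!}\,\alpha^-\wedge\omega^{n-2}$, in place of self-duality. Thus one shows $\int_X\alpha^+\wedge\alpha^-\wedge\omega^{n-2}=0$ by bi-degree reasons, proves $\int_X\eta\wedge\alpha^-\wedge\omega^{n-2}=0$ via the same Stokes/Cauchy--Schwarz limiting argument you outline (with the bounded factor $\omega^{n-2}$ harmless in the estimates), and concludes $(n-2)!\,\lVert\alpha^-\rVert_{L_2(X)}^2=0$. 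When $n=2$ this reduces verbatim to your proof, so your approach is right in spirit; the missing ingredient is just the Lefschetz-type factor $\omega^{n-2}$.
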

\begin{proof}
Let $[\alpha]\in L_2H^+(X)\cap L_2H^-(X)$. Then, there exist $\alpha^\pm\in L_2\mathcal{Z}^\pm$ such that
$$
\alpha = \alpha^+ +\lambda\,,\qquad \alpha=\alpha^- +\mu,
$$
where $\alpha^\pm\in L_2\mathcal{Z}^\pm$. Then
\begin{equation}\label{decomposition-n}
\alpha^+=\alpha^-+\eta
\end{equation}
where $\eta\in\overline{d(L_2A^1(X))}$.
Let $\{\eta_j\}_{j\in\N}$ be a sequence in $L_2(X)$ such that
$\eta_j=d\eta'_j$, $\eta'_j\in L_2(X)$ and $d\eta'_j\to \eta$ in $L_2(X)$. Then, by bi-degree reasons,
\begin{equation}\label{invariant-anti-invariant}
[\alpha^+]\cup [\alpha^-]\cup[\omega^{n-2}]=\int_X\alpha^+\wedge\alpha^-\wedge\omega^{n-2} =0.
\end{equation}
We claim that
\begin{equation}\label{stokes-anti-invariant-n}
\int_X\eta\wedge\alpha^-\wedge\omega^{n-2} =0.
\end{equation}
Indeed,
$$
\begin{array}{ll}
\vert\langle d\eta'_j\wedge\alpha^-,*\omega^{n-2}\rangle -\langle \eta\wedge\alpha^-,*\omega^{n-2}\rangle \vert&\leq
\int_X \vert (d\eta'_j-\eta)\wedge \alpha^-\vert \vert *\omega^{n-2}\vert \hbox{\rm Vol}_X\\[10pt]
&\leq C\int_X \vert (d\eta'_j-\eta)\vert \vert\alpha^-\vert \hbox{\rm Vol}_X\\[10pt]
&\leq\lVert d\eta'_j-\eta\lVert_{L_2(X)}
\lVert \alpha^- \lVert_{L_2(X)},
\end{array}
$$
that is $\langle d\eta'_j\wedge\alpha^-,*\omega^{n-2}\rangle\to\langle \eta\wedge\alpha^-,*\omega^{n-2}\rangle$, for $j\to+\infty$.
On the other hand, by Lemma \ref{stokes},
$$
0= \lim_{j \to \infty} \langle d\eta'_j\wedge\alpha^-,*\omega^{n-2}\rangle =\int_X \eta\wedge\alpha^-\wedge\omega^{n-2},
$$
that is, \eqref{stokes-anti-invariant-n}. Therefore, by \eqref{antiinv}, \eqref{decomposition-n} and \eqref{invariant-anti-invariant} we have
$$
\begin{array}{ll}
[\alpha^+]\cup[\alpha^-]\cup[\omega^{n-2}]&=\int_X(\alpha^-+\eta)\wedge\alpha^-\wedge\omega^{n-2}
=\int_X\alpha^-\wedge\alpha^-\wedge\omega^{n-2}\\[10pt]
&=(n-2)!\int_X\alpha^-\wedge *\alpha^-=\lVert\alpha^-\lVert_{L_2(X)}.
\end{array}
$$
Hence $[\alpha]=0$.
\end{proof}
\begin{ex}
Let ${\Delta}^2=\{(z_1,z_2)\in\C^2\,\,\,\vert\,\,\, \vert z_1\vert<r_1,\,\vert z_2\vert<r_2\}$ be a polydisc in $\C^2$ endowed with the complete and
$d$-bounded K\"ahler metric
$$
\omega=i\partial\overline{\partial}(\sum_{j=1}^2\log(1-\vert z_j\vert^2).
$$
Then, the real $J$-anti-invariant forms
$$
\frac{1}{2}(dz_1\wedge dz_2+d\overline{z}_1\wedge d\overline{z}_2)\,,\qquad
\frac{1}{2i}(dz_1\wedge dz_2-d\overline{z}_1\wedge d\overline{z}_2)
$$
and the real $J$-invariant forms
$$
\frac{1}{2}(dz_1\wedge d\overline{z}_2+d\overline{z}_1\wedge d z_2)\,,\qquad
\frac{1}{2i}(dz_1\wedge d\overline{z}_2-d\overline{z}_1\wedge dz_2)
$$
are $L_2$-harmonic, so that $L_2H^\pm(\Delta^2)\neq\{0\}$.
\end{ex}
\begin{rem}
Notice that for the de Rham cohomology, Dr\v{a}ghici, Li and Zhang in \cite[Theorem 3.24]{DLZ1} constructed non-compact complex 
surfaces for which $H^+(M)\cap H^-(M)\neq\{0\}$. 
\end{rem}

\end{document}